\newtheorem{theorem}{Theorem}
\newtheorem{axiom}{Axiom}
\newtheorem{corollary}[theorem]{Corollary}
\newtheorem{definition}[axiom]{Definition}
\newtheorem{lemma}[theorem]{Lemma}
\newenvironment{remark}{\rem\rm}{\endrem}
\newcounter{unnumber}
\newenvironment{proof}{\prf\rm}{\hfill{$\blacksquare$}\endprf}
\newcommand{\R}{\mathbb{R}}%
\newcommand{\e}{\varepsilon}%
\newcommand{\ol}{\overline}%
\newcommand{\n}{{\nabla}}
\newcommand{\p}{{\partial}}
\newcommand{\ds}{\displaystyle}
\newcommand{\To}{\longrightarrow}
\def\a{\alpha}
\def\b{\beta}
\def\e{\epsilon}
\def\t{\theta}
\def\g{\gamma}
\def\s{\sigma}
\def\l{\lambda}
\def\<{\langle}
\def\>{\rangle}
\DeclareMathOperator*\dom{dom}%
\DeclareMathOperator*\prox{prox}%
\DeclareMathOperator*\argmin{argmin}
\DeclareMathOperator*\crit{crit}
\DeclareMathOperator*\dist{dist}
\DeclareMathOperator*\proj{proj}
\title{Approaching nonsmooth nonconvex minimization through second order proximal-gradient dynamical systems}
\author{Radu Ioan Bo\c{t} \thanks{University of Vienna, Faculty of Mathematics, Oskar-Morgenstern-Platz 1, A-1090 Vienna, Austria,
email: radu.bot@univie.ac.at. Research partially supported by FWF (Austrian Science Fund), project I 2419-N32.} \and
Ern\"{o} Robert Csetnek \thanks {University of Vienna, Faculty of Mathematics, Oskar-Morgenstern-Platz 1, A-1090 Vienna, Austria,
email: ernoe.robert.csetnek@univie.ac.at. Research partially supported by FWF (Austrian Science Fund), project P 29809-N32 and by 
an Advanced Fellowship STAR-UBB of Babe\c{s}-Bolyai University, Cluj Napoca.}
 \and Szil\'{a}rd Csaba L\'{a}szl\'{o} \thanks{Technical University of Cluj-Napoca, Department of Mathematics, Memorandumului 28, Cluj-Napoca, 
 Romania, e-mail: szilard.laszlo@math.utcluj.ro. This work was supported by a grant of Ministry of Research and Innovation, 
 CNCS - UEFISCDI, project number PN-III-P4-ID-PCE-2016-0190, within PNCDI III.}}
\begin{document}
\maketitle

\noindent \textbf{Abstract.} We investigate the asymptotic properties of the trajectories generated by a second-order dynamical system of proximal-gradient type stated in connection  with the minimization of the sum of a nonsmooth convex  and a (possibly nonconvex) smooth function. 
The convergence of the generated trajectory to a critical point of the objective is ensured provided a regularization of the objective function satisfies the Kurdyka-\L{}ojasiewicz property. We also provide convergence rates for the trajectory formulated in terms of the \L{}ojasiewicz exponent. \vspace{1ex}

\noindent \textbf{Key Words.} second order dynamical system, nonsmooth nonconvex optimization, limiting subdifferential, Kurdyka-\L{}ojasiewicz  property\vspace{1ex}

\noindent \textbf{AMS subject classification.}  34G25, 47J25, 47H05, 90C26, 90C30, 65K10

\section{Introduction}\label{sec-intr}

Let $f:\R^n\to \R \cup \{+ \infty\}$ be a proper, convex and lower semicontinuous function  and let $g:\R^n\to \R$ be a (possibly nonconvex)
Fr\'{e}chet differentiable  function with $\b$-Lipschitz continuous gradient, i.e. 
there exists $\b\ge 0$ such that $\|\n g(x)-\n g(y)\|\le \b\|x-y\|$ for all $x,y\in\R^n.$ In this paper we investigate the optimization problem
\begin{equation}\label{opt-pb} \inf_{x\in\R^n}[f(x)+g(x)]\end{equation}
by  associating to it the following second order dynamical system of implicit-type
\begin{equation}\label{dysy}
\left\{
\begin{array}{ll}
\ddot{x}(t)+\g\dot{x}(t)+x(t)=\prox_{\l f}\big(x(t)-\l \nabla g(x(t))\big)\\
x(0)=u_0,\,\dot{x}(0)=v_0,
\end{array}
\right.
\end{equation}
where $u_0,v_0\in \R^n$, $\g,\l\in (0,+\infty)$ and 
\begin{equation}\label{intr-prox-def}  \prox\nolimits_{\lambda f} : \R^n \rightarrow \R^n, \quad \prox\nolimits_{\lambda f}(x)=\argmin_{y\in \R^n}\left \{f(y)+\frac{1}{2\lambda}\|y-x\|^2\right\},
\end{equation}
denotes the proximal point operator of $\lambda f$.

Dynamical systems of proximal-gradient type associated to optimization problems have been intensively treated in the literature. In \cite{bolte-2003}, Bolte 
studied the convergence of the trajectories of the first order dynamical system
\begin{equation}\label{intr-syst-bolte}\left\{
\begin{array}{ll}
\dot x(t)+x(t)=\proj_C\big(x(t)-\l\nabla g(x(t))\big)\\
x(0)=x_0,
\end{array}\right.\end{equation}
where $g:\R^n\rightarrow\R$ is a convex smooth function, $C\subseteq \R^n$ is a nonempty, closed and convex set, 
$x_0\in \R^n$, and $\proj_C$ denotes the projection operator on the set $C$.  The 
trajectory of \eqref{intr-syst-bolte} has been proved to converge to a minimizer of the optimization problem  
\begin{equation}\label{intr-opt-bolte} 
\inf_{x\in C} g(x),                                                             
\end{equation}
provided the latter is solvable.  We refer also to the work of Antipin \cite{antipin} for further results related to
\eqref{intr-syst-bolte}. 

The following extension of the dynamical system \eqref{intr-syst-bolte} 
\begin{equation}\label{intr-syst-abb-att}\left\{
\begin{array}{ll}
\dot x(t)+x(t)=\prox_{\l f}\big(x(t)-\l\nabla g(x(t))\big)\\
x(0)=x_0,
\end{array}\right.\end{equation}
where $f:\R^n\rightarrow\R\cup\{+\infty\}$ is a proper, convex and lower semicontinuous function, 
$g:\R^n\rightarrow\R$ is a convex smooth function and $x_0\in \R^n$, has been recently considered by Abbas and Attouch \cite{abbas-att-arx14} in relation to the optimization problem \eqref{opt-pb}. 
In case \eqref{opt-pb} is solvable, the trajectory generated by \eqref{intr-syst-abb-att} has been proved to converge to a global minimizer of it.

In connection with the optimization problem \eqref{intr-opt-bolte}, the second order  projected-gradient system  
\begin{equation}\label{intr-gr-pr}\left\{
\begin{array}{ll}
\ddot x(t) + \gamma\dot x(t) + x(t) = \proj_C(x(t)-\l\nabla g (x(t)))\\
x(0)=u_0, \dot x(0)=v_0,
\end{array}\right.\end{equation}
with damping parameter $\gamma > 0$ and step size $\l > 0$, has been considered in \cite{att-alv, antipin}.
The system \eqref{intr-gr-pr} becomes in case $C=\R^n$ 
the  so-called "heavy ball method with friction''. This nonlinear oscillator with damping 
is, in case $n=2$, a simplified version of the differential
system describing the motion of a heavy ball that rolls over the graph of $g$ and 
keeps rolling under its own inertia until friction stops it at a critical point of $g$ (see \cite{att-g-r}). 

Implicit dynamical systems related to both optimization 
problems and monotone inclusions have been  considered in the literature also by Attouch and Svaiter in \cite{att-sv2011}, Attouch, Abbas and Svaiter in \cite{abbas-att-sv} and  Attouch, Alvarez and Svaiter in \cite{att-alv-sv}. These investigations have been 
continued and extended in \cite{b-c-dyn-KM, b-c-dyn-pen-cont, b-c-dyn-sec-ord, b-c-conv-rate-cont}.

The aim of this manuscript is to study the asymptotic properties of the trajectory generated by the second order dynamical system \eqref{dysy} under convexity assumptions for $f$ and by allowing
$g$ to be nonconvex. In the same setting, a first order dynamical system of type \eqref{intr-syst-abb-att} attached to \eqref{opt-pb} has been recently studied in \cite{bc-forder-kl}.  The 
main results of the current work are Theorem \ref{convergence}, where we prove convergence of the trajectories to a critical point of the 
objective function of \eqref{opt-pb}, provided a regularization of it satisfies the Kurdyka-\L{}ojasiewicz property, and Theorem \ref{th-conv-rate}, where convergences rates by means of the \L{}ojasiewicz exponent are provided for both the trajectory and the velocity. The convergence analysis relies on methods and techniques of real algebraic geometry introduced by \L{}ojasiewicz \cite{lojasiewicz1963} and Kurdyka \cite{kurdyka1998} and extended to the nonsmooth setting by Attouch, Bolte and Svaiter \cite{att-b-sv2013} and Bolte, Sabach and Teboulle \cite{b-sab-teb}.

The explicit discretization of \eqref{dysy} with respect to the time variable $t$, with step size $h_k >0$, damping variable $\gamma_k >0$ and initial points $x_0:=u_0$ and $x_1:=v_0$ yields the iterative scheme
$$\frac{x_{k+1} - 2x_k + x_{k-1}}{h_k^2} + \gamma_k \frac{x_{k+1} - x_k}{h_k} + x_k = \prox\nolimits_{\l f}\big(x_k -\l \nabla g(x_k)\big)  \ \forall k \geq 1.$$
For $h_k=1$ this becomes
$$x_{k+1} = \left (1-\frac{1}{1+\gamma_k} \right) x_k + \frac{1}{1 + \gamma_k} \prox\nolimits_{\l f}\big(x_k -\l \nabla g(x_k)\big) + \frac{1}{1 + \gamma_k} (x_k - x_{k-1}) \ \forall k \geq 1,$$
which is a relaxed proximal-gradient algorithm for minimizing $f+g$ with inertial effects. For inertial-type algorithms we refer the reader to \cite{alvarez2000, alvarez2004, alvarez-attouch2001}. The dynamical system investigated in this paper can be seen as a continuous counterpart of the inertial-type 
algorithms presented in \cite{bcl} and \cite{ipiano}.

\section{Preliminaries}\label{sec2}

In this section we introduce some basic notions and present preliminary results that will be used in the sequel. The finite-dimensional spaces considered in the  manuscript are endowed with the Euclidean norm topology. The {\it domain} of the function  $f:\R^n\rightarrow \R \cup \{+\infty\}$ is defined by $\dom f=\{x\in\R^n:f(x)<+\infty\}$. 
We say that $f$ is {\it proper}, if $\dom f\neq\emptyset$. For the following generalized subdifferential notions and their basic properties we refer to \cite{boris-carte, rock-wets}.
Let $f:\R^n\rightarrow \R \cup \{+\infty\}$ be a proper and lower semicontinuous function. For $x\in\dom f$, the
{\it Fr\'{e}chet (viscosity) subdifferential} of $f$ at $x$ is defined as
$$\hat{\partial}f(x)= \left \{v\in\R^n: \liminf_{y\rightarrow x}\frac{f(y)-f(x)-\<v,y-x\>}{\|y-x\|}\geq 0 \right \}.$$ For
$x\notin\dom f$, we set $\hat{\partial}f(x):=\emptyset$. The {\it limiting (Mordukhovich) subdifferential} is defined at $x\in \dom f$ by
$$\partial f(x)=\{v\in\R^n:\exists x_k\rightarrow x,f(x_k)\rightarrow f(x)\mbox{ and }\exists v_k\in\hat{\partial}f(x_k),v_k\rightarrow v \mbox{ as }k\rightarrow+\infty\},$$
while for $x \notin \dom f$, we set $\partial f(x) :=\emptyset$. Notice the inclusion $\hat\partial f(x)\subseteq\partial f(x)$ for each $x\in\R^n$.

In case $f$ is convex, these notions coincide with the {\it convex subdifferential}, which means that
$\hat\partial f(x)=\partial f(x)=\{v\in\R^n:f(y)\geq f(x)+\<v,y-x\> \ \forall y\in \R^n\}$ for all $x\in\dom f$.

 We will use the following closedness criterion
concerning the graph of the limiting subdifferential: if $(x_k)_{k\geq 0}$ and $(v_k)_{k\geq 0}$ are sequences in $\R^n$ such that
$v_k\in\partial f(x_k)$ for all $k \geq 0$, $(x_k,v_k)\rightarrow (x,v)$ and $f(x_k)\rightarrow f(x)$ as $k\rightarrow+\infty$, then
$v\in\partial f(x)$.

The Fermat rule reads in this nonsmooth setting as: if $x\in\R^n$ is a local minimizer of $f$, then $0\in\partial f(x)$. Notice that
in case $f$ is continuously differentiable around $x \in \R^n$ we have $\partial f(x)=\{\nabla f(x)\}$. We denote by
$$\crit(f)=\{x\in\R^n: 0\in\partial f(x)\}$$ the set of {\it (limiting)-critical points} of $f$. We also mention the 
following subdifferential rule: if $f:\R^n\rightarrow \R \cup \{+\infty\}$ is proper and lower semicontinuous and 
$h:\R^n\rightarrow \R$ is a continuously differentiable function, then $\partial (f+h)(x)=\partial f(x)+\nabla h(x)$ for all $x\in\R^n$.

\begin{definition}\label{abs-cont} \rm (see, for instance, \cite{att-sv2011, abbas-att-sv}) A function $x:[0,+\infty)\rightarrow \R^n$  is said to be locally absolutely continuous, if is absolutely continuous on every interval $[0,T],\,T>0$, that is, one of the
following equivalent properties holds:

(i)  there exists an integrable function $y:[0,T]\rightarrow \R^n$ such that $$x(t)=x(0)+\int_0^t y(s)ds \ \ \forall t\in[0,T];$$

(ii) $x$ is continuous and its distributional derivative is Lebesgue integrable on $[0,T]$;

(iii) for every $\varepsilon > 0$, there exists $\eta >0$ such that for any finite family of intervals $I_k=(a_k,b_k)\subseteq [0,T]$ we have the implication
$$\left(I_k\cap I_j=\emptyset \mbox{ and }\sum_k|b_k-a_k| < \eta\right)\Longrightarrow \sum_k\|x(b_k)-x(a_k)\| < \varepsilon.$$
\end{definition}

\begin{remark}\label{rem-abs-cont}\rm (a) It follows from the definition that an absolutely continuous function is differentiable almost
everywhere, its derivative coincides with its distributional derivative almost everywhere and one can recover the function from its derivative $\dot x=y$ by the integration formula (i).

(b) If $x:[0,T]\rightarrow \R^n$ (where $T>0$) is absolutely continuous and $B:\R^n\rightarrow \R^n$ is $L$-Lipschitz continuous
(where $L\geq 0$), then the function $z=B\circ x$ is absolutely continuous, too. This can be easily seen by using the characterization of absolute continuity in
Definition \ref{abs-cont}(iii). Moreover, $z$ is almost everywhere differentiable and the inequality $\|\dot z (\cdot)\|\leq L\|\dot x(\cdot)\|$ holds almost everywhere.
\end{remark}

Further, we recall the following result of Br\'ezis \cite{B}.

\begin{lemma}\label{deriv} Let $f:\R^n\To\R\cup\{+\infty\}$ be a proper, convex and lower semicontinuous function. Let $x\in L^2([0,T],\R^n),\, T>0,$ be absolutely continuous such that $\dot{x}\in L^2([0,T],\R^n)$ and $x(t)\in\dom f$ for almost every $t\in[0,T].$ Assume that there exists $\xi\in L^2([0,T],\R^n)$ such that $\xi(t)\in\p f(x(t))$ for almost every $t\in[0,T].$ Then the function $t\To f(x(t))$ is absolutely continuous and for every $t$ such that $x(t)\in\dom \p f$ we have
$$\frac{d}{dt}f(x(t))=\<\dot{x}(t),h\>,\,\forall h\in\p f(x(t)).$$
\end{lemma}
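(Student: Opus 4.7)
The plan is to split the argument into two stages: first prove that $t\mapsto f(x(t))$ is absolutely continuous on $[0,T]$, so that it is differentiable almost everywhere, and then identify its derivative at any $t_0$ where both $x$ and $f\circ x$ are differentiable and $x(t_0)\in\dom\partial f$.

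For absolute continuity, let $E\subseteq[0,T]$ denote the full-measure set on which $x$ is differentiable, $x(t)\in\dom f$, and $\xi(t)\in\partial f(x(t))$. For $s,t\in E$ with $s<t$, two applications of the convex subgradient inequality yield
$$\<\xi(s),x(t)-x(s)\>\le f(x(t))-f(x(s))\le\<\xi(t),x(t)-x(s)\>,$$
and hence
$$\bigl|f(x(t))-f(x(s))\bigr|\le\bigl(\|\xi(s)\|+\|\xi(t)\|\bigr)\int_s^t\|\dot x(r)\|\,dr.$$
Since $\xi,\dot x\in L^2([0,T],\R^n)$, the product $\|\xi(\cdot)\|\,\|\dot x(\cdot)\|$ lies in $L^1([0,T])$ by Cauchy--Schwarz. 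Using this integrable majorant together with continuity of $x$ to approximate arbitrary interval endpoints by points of the dense set $E$, the $\varepsilon$--$\eta$ criterion of Definition \ref{abs-cont}(iii) is verified for $f\circ x$.

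For the derivative formula, fix $t_0$ as above and choose any $h\in\partial f(x(t_0))$. Convexity of $f$ gives
$$f(x(t))-f(x(t_0))\ge\<h,x(t)-x(t_0)\>\quad\forall t\in[0,T].$$
Dividing by $t-t_0>0$ and letting $t\downarrow t_0$ yields $(f\circ x)'(t_0)\ge\<h,\dot x(t_0)\>$; dividing by $t-t_0<0$ (which reverses the inequality) and letting $t\uparrow t_0$ yields the reverse estimate. Equality follows, and since $h$ was an arbitrary element of $\partial f(x(t_0))$, this produces the stated formula.

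The main obstacle is the absolute continuity step, because the two-sided subgradient inequality is only guaranteed for pairs $(s,t)$ in $E\times E$, whereas Definition \ref{abs-cont}(iii) tests the estimate on arbitrary endpoints of intervals. This is resolved by a density argument: $E$ is dense in $[0,T]$, so any interval endpoint is approachable from within $E$, and the $L^1$ majorant $\|\xi\|\,\|\dot x\|$ allows one to pass to the limit while retaining the $\varepsilon$--$\eta$ control. Once absolute continuity is in hand, the derivative identification is a clean two-sided limit argument, and the resulting formula is striking in that it holds for \emph{every} $h\in\partial f(x(t_0))$, not just the particular representative $\xi(t_0)$ furnished by the hypothesis.
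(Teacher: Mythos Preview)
The paper does not supply a proof of this lemma; it is simply quoted from Br\'ezis \cite{B}, so there is no in-paper argument to compare against. Your derivative-identification step (part two) is correct and is the standard two-sided limit argument. The absolute-continuity step, however, has a real gap. From the subgradient inequalities you correctly obtain, for $s<t$ in $E$,
\[
|f(x(t))-f(x(s))|\le(\|\xi(s)\|+\|\xi(t)\|)\int_s^t\|\dot x(r)\|\,dr,
\]
and you then invoke $\|\xi(\cdot)\|\,\|\dot x(\cdot)\|\in L^1$ as the integrable majorant. But that product never appears in your bound: what appears is the \emph{endpoint} value $\|\xi(s)\|+\|\xi(t)\|$ multiplied by $\int_s^t\|\dot x\|$. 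Since $\xi$ is only in $L^2$, its pointwise values are uncontrolled, so for a disjoint family $(a_k,b_k)$ of small total length there is no reason the sum $\sum_k(\|\xi(a_k)\|+\|\xi(b_k)\|)\int_{a_k}^{b_k}\|\dot x\|$ should be small. The density argument does not rescue this: the $\varepsilon$--$\eta$ criterion must hold for \emph{all} such families with endpoints in $E$, including families whose endpoints happen to land where $\|\xi\|$ is large.

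The standard repair (essentially Br\'ezis's argument) is to refine each interval. For $s<t$ in $E$ take a partition $s=r_0<r_1<\dots<r_N=t$ with all $r_j\in E$, telescope, and apply the subgradient inequality on each subinterval:
\[
\sum_{j=1}^N\langle\xi(r_{j-1}),x(r_j)-x(r_{j-1})\rangle\;\le\;f(x(t))-f(x(s))\;\le\;\sum_{j=1}^N\langle\xi(r_j),x(r_j)-x(r_{j-1})\rangle.
\]
Each outer sum equals $\int_s^t\langle\xi_N,\dot x\rangle$ for a step function $\xi_N$ built from the values $\xi(r_j)$; choosing the partitions so that $\xi_N\to\xi$ in $L^2$ (possible since $E$ has full measure) forces both bounds to converge to $\int_s^t\langle\xi(r),\dot x(r)\rangle\,dr$. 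This gives the exact identity $f(x(t))-f(x(s))=\int_s^t\langle\xi,\dot x\rangle$ for all $s,t\in E$, whose integrand is in $L^1([0,T])$ by Cauchy--Schwarz, and absolute continuity (first on $E$, then on $[0,T]$ via lower semicontinuity of $f$ and density) follows immediately.
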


The following central results will be used when proving the convergence of the trajectories generated by the dynamical system \eqref{dysy}; see, for example, \cite[Lemma 5.1]{abbas-att-sv} and \cite[Lemma 5.2]{abbas-att-sv}, respectively.

\begin{lemma}\label{fejer-cont1} Suppose that $F:[0,+\infty)\rightarrow\R$ is locally absolutely continuous and bounded below and that
there exists $G\in L^1([0,+\infty))$ such that for almost every $t \in [0,+\infty)$ $$\frac{d}{dt}F(t)\leq G(t).$$
Then there exists $\lim_{t\rightarrow +\infty} F(t)\in\R$.
\end{lemma}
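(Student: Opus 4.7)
The plan is to reduce the statement to a classical fact about monotone bounded functions by absorbing $G$ into $F$. I would introduce the auxiliary function
\[
H(t) := F(t) - \int_0^t G(s)\,ds, \quad t \geq 0,
\]
which is locally absolutely continuous as the sum of two locally absolutely continuous functions (the second being an indefinite Lebesgue integral of an $L^1_{\textrm{loc}}$ function). The hypothesis $\frac{d}{dt}F(t)\le G(t)$ together with the fact that $t\mapsto \int_0^t G(s)\,ds$ has derivative $G(t)$ for almost every $t$ then yields
\[
\frac{d}{dt}H(t) = \frac{d}{dt}F(t) - G(t) \leq 0 \quad \text{for a.e. } t\in[0,+\infty).
\]
By the fundamental theorem of calculus for absolutely continuous functions, $H(t_2)-H(t_1) = \int_{t_1}^{t_2}\dot H(s)\,ds \leq 0$ for any $0 \leq t_1 \leq t_2$, so $H$ is non-increasing on $[0,+\infty)$.

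Next I would establish that $H$ is bounded below. Writing $m := \inf_{t\ge 0} F(t) \in \R$ (which is finite by hypothesis) and using $G\in L^1([0,+\infty))$, we get
\[
H(t) \geq m - \int_0^t |G(s)|\,ds \geq m - \|G\|_{L^1([0,+\infty))}
\]
for every $t\ge 0$. A non-increasing function on $[0,+\infty)$ that is bounded below admits a finite limit as $t\to +\infty$; denote this limit by $\ell \in \R$.

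Finally, since $G\in L^1([0,+\infty))$, the map $t\mapsto \int_0^t G(s)\,ds$ converges to $\int_0^{+\infty} G(s)\,ds \in \R$ as $t\to +\infty$. From the identity $F(t) = H(t) + \int_0^t G(s)\,ds$ we conclude
\[
\lim_{t\to +\infty} F(t) = \ell + \int_0^{+\infty} G(s)\,ds \in \R,
\]
which is the claim. There is no real obstacle in this argument; the only delicate point is justifying that an a.e.\ non-positive distributional derivative of a locally absolutely continuous function implies monotonicity, which is exactly the fundamental theorem of calculus recalled in Remark \ref{rem-abs-cont}(a).
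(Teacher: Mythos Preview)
Your argument is correct and is the standard way to prove this lemma: subtract the primitive of $G$ to obtain a non-increasing function, use the lower bound on $F$ together with $G\in L^1$ to bound this function from below, and then add back the (convergent) integral. Note that the paper does not actually supply a proof of this lemma; it only cites \cite[Lemma~5.1]{abbas-att-sv}, so there is no in-paper argument to compare against, but your proof is precisely the one given in that reference.
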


\begin{lemma}\label{fejer-cont2}  If $1 \leq p < \infty$, $1 \leq r \leq \infty$, $F:[0,+\infty)\rightarrow[0,+\infty)$ is
locally absolutely continuous, $F\in L^p([0,+\infty))$, $G:[0,+\infty)\rightarrow\R$, $G\in  L^r([0,+\infty))$ and
for almost every $t \in [0,+\infty)$ $$\frac{d}{dt}F(t)\leq G(t),$$ then $\lim_{t\rightarrow +\infty} F(t)=0$.
\end{lemma}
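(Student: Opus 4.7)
The plan is to argue by contradiction, using the $L^p$ integrability of $F$ together with the differential inequality $\dot F \leq G$ to manufacture a divergent lower bound for $\int F^p$.

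First, I would assume that $F(t)\not\to 0$ as $t\to+\infty$. Since $F\geq 0$, this furnishes $\varepsilon>0$ and a sequence $t_n\nearrow+\infty$ with $F(t_n)\geq 2\varepsilon$. Integrating $\dot F\leq G$ from $s$ to $t_n$ for any $s\leq t_n$ gives
\[
F(s) \;\geq\; F(t_n) - \int_s^{t_n} G(u)\,du \;\geq\; 2\varepsilon - \int_s^{t_n} |G(u)|\,du.
\]

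The next step is to exhibit an interval of fixed positive length preceding each $t_n$ on which the latter integral is small. Let $r'$ denote the conjugate exponent of $r$, with the conventions $r'=+\infty$ if $r=1$ and $r'=1$ if $r=+\infty$. H\"older's inequality gives, for $s\in[t_n-\delta,t_n]$,
\[
\int_s^{t_n} |G(u)|\,du \;\leq\; \delta^{1/r'}\,\|G\|_{L^r([t_n-\delta,\,t_n])}.
\]
If $1\leq r<+\infty$, I would fix $\delta$ (say $\delta=1$) and exploit the fact that, since $G\in L^r$, the tail norm $\|G\|_{L^r([t_n-\delta,t_n])}$ tends to $0$ as $n\to+\infty$; hence this bound drops below $\varepsilon$ for all $n$ large enough. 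If $r=+\infty$, I would instead choose $\delta$ small enough that $\delta\|G\|_{L^\infty}<\varepsilon$. In either case we obtain $F(s)\geq\varepsilon$ for every $s\in[t_n-\delta,t_n]$ and every sufficiently large $n$.

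Finally, by thinning the sequence $(t_n)$ if necessary, the intervals $[t_n-\delta,t_n]$ can be taken pairwise disjoint, whence
\[
\int_0^{+\infty} F(s)^p\,ds \;\geq\; \sum_n \int_{t_n-\delta}^{t_n} F(s)^p\,ds \;\geq\; \sum_n \delta\,\varepsilon^p \;=\; +\infty,
\]
contradicting $F\in L^p([0,+\infty))$. The only delicate point is handling the extreme exponents $r=1$ and $r=+\infty$ together with the intermediate regime $1<r<+\infty$ in a single argument; the H\"older estimate unifies them, with $\delta$ chosen once and for all in each case, and only the mechanism for forcing the right-hand side to be small differs between the finite and infinite $r$ situations.
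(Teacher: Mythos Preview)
Your argument is correct: the backward integration of $\dot F\leq G$, the H\"older estimate to control $\int_s^{t_n}|G|$ uniformly on an interval of fixed length (with the two mechanisms you describe for $r<\infty$ versus $r=\infty$), and the disjoint-interval lower bound for $\int F^p$ all work as stated.

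There is, however, nothing to compare against in the paper itself. The authors do not give a proof of this lemma; they quote it from \cite[Lemma~5.2]{abbas-att-sv} as a black box. Your proof therefore stands on its own rather than as an alternative to anything the paper does.
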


\section{Existence and uniqueness of the trajectories}\label{sec3}

Existence and uniqueness of the trajectories of \eqref{dysy} are obtained in the framework of the global version of the Cauchy-Lipschitz Theorem 
(see for instance \cite[Theorem 17.1.2(b)]{ABG}), by rewriting \eqref{dysy} as a first order dynamical system in a suitable product space
and by employing the Lipschitz continuity of the proximal operator and of  the gradient.

\begin{theorem}\label{uniq} For every starting points $u_0,v_0\in \R^n$, the dynamical system \eqref{dysy} has a unique global solution $x\in C^2([0,+\infty),\R^n)$.
\end{theorem}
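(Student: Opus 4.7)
The plan is to reduce the second-order system \eqref{dysy} to a first-order autonomous system on the product space $\R^n\times\R^n$ and then invoke the global Cauchy--Lipschitz theorem. Setting $y(t):=\dot{x}(t)$ and $Z(t):=(x(t),y(t))$, the system \eqref{dysy} is equivalent to
\begin{equation*}
\dot{Z}(t)=F(Z(t)),\qquad Z(0)=(u_0,v_0),
\end{equation*}
where $F:\R^n\times\R^n\to\R^n\times\R^n$ is defined by
\begin{equation*}
F(x,y)=\bigl(y,\;-\g y-x+\prox\nolimits_{\l f}(x-\l\n g(x))\bigr).
\end{equation*}

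The key step is to verify that $F$ is globally Lipschitz on $\R^n\times\R^n$. The first component $(x,y)\mapsto y$ is obviously $1$-Lipschitz. For the second component I would write it as a sum of three terms. The map $y\mapsto-\g y$ is $\g$-Lipschitz, the map $x\mapsto-x$ is $1$-Lipschitz, and the composition $x\mapsto\prox_{\l f}(x-\l\n g(x))$ is Lipschitz because $\prox_{\l f}$ is firmly nonexpansive (and in particular $1$-Lipschitz, being the resolvent of the maximally monotone operator $\p f$) and $x\mapsto x-\l\n g(x)$ is $(1+\l\b)$-Lipschitz owing to the $\b$-Lipschitz continuity of $\n g$. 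Combining these, $F$ is globally Lipschitz with some constant $L=L(\g,\l,\b)$, and in particular locally integrable in $t$.

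Having global Lipschitz continuity, the global Cauchy--Lipschitz theorem (cited from \cite[Theorem 17.1.2(b)]{ABG}) yields a unique solution $Z\in C^1([0,+\infty),\R^n\times\R^n)$ of the first-order Cauchy problem on the whole half-line $[0,+\infty)$ (no finite-time blow-up is possible because global Lipschitzness gives an a priori linear bound $\|\dot Z(t)\|\le\|F(0)\|+L\|Z(t)\|$ and hence a Gronwall estimate). Unwinding the definitions, $x$ and $\dot{x}=y$ lie in $C^1([0,+\infty),\R^n)$, so $x\in C^2([0,+\infty),\R^n)$, and the initial conditions $x(0)=u_0$, $\dot{x}(0)=v_0$ are satisfied.

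I do not anticipate a genuine obstacle here: the nonexpansiveness of the proximal operator trivializes what might have been the delicate point, and the Lipschitz continuity of $\n g$ is built into our standing assumptions. The only thing to be careful about is that the rewriting of \eqref{dysy} as a first-order system is equivalent in both directions (in particular that the $C^2$-regularity of $x$ follows from the $C^1$-regularity of $Z$), which is immediate from $\ddot{x}=\dot{y}$.
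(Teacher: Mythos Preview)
Your proposal is correct and follows essentially the same approach as the paper: rewrite \eqref{dysy} as a first-order system on $\R^n\times\R^n$, verify that the right-hand side is globally Lipschitz using the nonexpansiveness of $\prox_{\l f}$ together with the $\b$-Lipschitz continuity of $\nabla g$, and conclude via the global Cauchy--Lipschitz theorem. The only noteworthy difference is that the paper carries out the Lipschitz estimate quantitatively and records an explicit constant $L_1$ (and, in a subsequent remark, a second constant $L_2$); these explicit constants are not needed for Theorem~\ref{uniq} itself but are used later in the asymptotic analysis to bound $\|x^{(3)}(t)\|$ in terms of $\|\dot{x}(t)\|$ and $\|\ddot{x}(t)\|$.
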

\begin{proof}

By making use of the notation $X(t)=(x(t),\dot{x}(t))$, the system \eqref{dysy} can be rewritten as
\begin{equation}\label{dysy1}
\left\{
\begin{array}{ll}
\dot{X}(t)=F(X(t))\\
X(0)=(u_0,v_0),
\end{array}
\right.
\end{equation}
where $F:\R^n\times\R^n\To \R^n\times\R^n,\, F(u,v)=\left(v, \prox_{\l f}\big(u-\l \nabla g(u)\big)-\g v-u\right).$

We prove the existence and uniqueness of a global solution of \eqref{dysy1} by using the Cauchy-Lipschitz Theorem. To this aim it is enough to show that $F$ is globally Lipschitz continuous.
Let be $(u,v), (\ol{u}, \ol{v}) \in \R^n \times \R^n$. We have
\begin{align*}
\|F(u,v)-F(\ol{u},\ol{v})\| \!& = \!\! \left\|\left(v-\ol{v},\prox\nolimits_{\l f}\big(u-\l \nabla g(u)\big)-\prox\nolimits_{\l f}\big(\ol{u}-\l \nabla g(\ol{u})\big)+\g(\ol{v}- v)+(\ol{u}-u)\right)\right\|\\
\!& = \!\!\sqrt{\|v-\ol{v}\|^2+\!\|\prox\nolimits_{\l f}\big(u-\l \nabla g(u)\big)-\prox\nolimits_{\l f}\big(\ol{u}-\l \nabla g(\ol{u})\big)+\!\g(\ol{v}- v)+(\ol{u}-u)\|^2}.
\end{align*}
We have
\begin{align*}
& \|\prox\nolimits_{\l f}\big(u-\l \nabla g(u)\big)-\prox\nolimits_{\l f}\big(\ol{u}-\l \nabla g(\ol{u})\big)+\g(\ol{v}- v)+(\ol{u}-u)\|^2=\\
& \|\prox\nolimits_{\l f}\big(u-\l \nabla g(u)\big)-\prox\nolimits_{\l f}\big(\ol{u}-\l \nabla g(\ol{u})\big)\|^2+\g^2\|\ol{v}- v\|^2+\|\ol{u}-u\|^2+\\
& 2\g\<\prox\nolimits_{\l f}\big(u-\l \nabla g(u)\big)-\prox\nolimits_{\l f}\big(\ol{u}-\l \nabla g(\ol{u})\big),\ol{v}- v\>+\\
& 2\<\prox\nolimits_{\l f}\big(u-\l \nabla g(u)\big)-\prox\nolimits_{\l f}\big(\ol{u}-\l \nabla g(\ol{u})\big),\ol{u}-u \>+\\
& 2\g\<\ol{v}- v,\ol{u}-u\>.
\end{align*}
By the nonexpansiveness of $\prox_{\l f}$ and the $\b$-Lipschitz property of $\n g$ we have
$$\|\prox\nolimits_{\l f}\big(u-\l \nabla g(u)\big)-\prox\nolimits_{\l f}\big(\ol{u}-\l \nabla g(\ol{u})\big)\|\le\|(u-\ol{u})-\l(\nabla g(u)-\nabla g(\ol{u})\|\le(1+\l\b)\|u-\ol{u}\|.$$
On the other hand,
\begin{align*}
2\g\<\prox\nolimits_{\l f}\big(u-\l \nabla g(u)\big)-\prox\nolimits_{\l f}\big(\ol{u}-\l \nabla g(\ol{u})\big),\ol{v}- v\> & \le \\
\g\|\prox\nolimits_{\l f}\big(u-\l \nabla g(u)\big)-\prox\nolimits_{\l f}\big(\ol{u}-\l \nabla g(\ol{u})\big)\|^2+\g\|\ol{v}- v\|^2 & \le\\
\g(1+\l\b)^2\|u-\ol{u}\|^2+\g\|\ol{v}- v\|^2,&
\end{align*}
\begin{align*}
2\<\prox\nolimits_{\l f}\big(u-\l \nabla g(u)\big)-\prox\nolimits_{\l f}\big(\ol{u}-\l \nabla g(\ol{u})\big),\ol{u}-u \> & \le\\
\|\prox\nolimits_{\l f}\big(u-\l \nabla g(u)\big)-\prox\nolimits_{\l f}\big(\ol{u}-\l \nabla g(\ol{u})\big)\|^2+\|\ol{u}-u\|^2 & \le\\
(1+(1+\l\b)^2)\|u-\ol{u}\|^2&
\end{align*}
and
$$2\g\<\ol{v}- v,\ol{u}-u\>\le\g\|\ol{v}- v\|^2+\g\|u-\ol{u}\|^2.$$
Consequently,
\begin{align*}
\|\prox\nolimits_{\l f}\big(u-\l \nabla g(u)\big)-\prox\nolimits_{\l f}\big(\ol{u}-\l \nabla g(\ol{u})\big)+\g(\ol{v}- v)+(\ol{u}-u)\|^2& \le\\
(\g+2)((1+\l\b)^2+1)\|u-\ol{u}\|^2+(\g^2+2\g)\|\ol{v}- v\|^2,&
\end{align*}
which leads to
$$\|F(u,v)-F(\ol{u},\ol{v})\|\le \sqrt{(\g+1)^2\|\ol{v}- v\|^2+(\g+2)((1+\l\b)^2+1)\|u-\ol{u}\|^2}\le L_1\|(u,v)-(\ol{u},\ol{v})\|,$$
where $L_1:=\sqrt{\max\big((\g+1)^2,(\g+2)((1+\l\b)^2+1)\big)}.$

Consequently, $F$ is globally Lipschitz continuous, which implies that \eqref{dysy1} has a global solution $X\in C^1([0,+\infty),\R^n\times\R^n)$. This shows that $x\in C^2([0,+\infty),\R^n).$
\end{proof}

\begin{remark} Another Lipschitz constant can be obtained by using the inequalities: 
\begin{align*}
2\g\<\prox\nolimits_{\l f}\big(u-\l \nabla g(u)\big)-\prox\nolimits_{\l f}\big(\ol{u}-\l \nabla g(\ol{u})\big),\ol{v}- v\> & \le \\
2\g\|\prox\nolimits_{\l f}\big(u-\l \nabla g(u)\big)-\prox\nolimits_{\l f}\big(\ol{u}-\l \nabla g(\ol{u})\big)\|\|\ol{v}- v\| & \le 2\g(1+\l\b)\|u-\ol{u}\|\|\ol{v}- v\|,
\end{align*}
\begin{align*}
2\<\prox\nolimits_{\l f}\big(u-\l \nabla g(u)\big)-\prox\nolimits_{\l f}\big(\ol{u}-\l \nabla g(\ol{u})\big),\ol{u}-u \> & \le\\
2\|\prox\nolimits_{\l f}\big(u-\l \nabla g(u)\big)-\prox\nolimits_{\l f}\big(\ol{u}-\l \nabla g(\ol{u})\big)\|\|\ol{u}- u\| & \le
2(1+\l\b)\|\ol{u}- u\|^2,&
\end{align*}
$$2\g\<\ol{v}- v,\ol{u}-u\>\le2\g\|\ol{u}-u\|\|\ol{v}- v\|,$$
and
$$2\|\ol{u}-u\|\|\ol{v}- v\|\le\|\ol{u}-u\|^2+\|\ol{v}- v\|^2.$$

In this case one obtains the Lipschitz constant
$$L_2:=\sqrt{\max((\g+1)^2+\g\l\b,(2+\l\b)^2+\g(2+\l\b))}.$$
\end{remark}

\begin{remark}
Considering again the setting of the proof of Theorem \ref{uniq}, from Remark \ref{rem-abs-cont}(b) it follows that $\ddot{X}$ exists almost everywhere on $[0,+\infty)$ and that for almost every $t\in[0,+\infty)$ one has
$$\|\ddot{X}(t)\|\le L_1\|\dot{X}(t)\|= \sqrt{\max\big((\g+1)^2,(\g+2)((1+\l\b)^2+1)\big)}\|\dot{X}(t)\|.$$
Hence, $\sqrt{\|\ddot{x}(t)\|^2+\|x^{(3)}(t)\|^2}\le \sqrt{\max\big((\g+1)^2,(\g+2)((1+\l\b)^2+1)\big)}\sqrt{\|\dot{x}(t)\|^2+\|\ddot{x}(t)\|^2},$ for almost every $t\in[0,+\infty)$, or, equivalently, 
\begin{align}\label{third-deriv}
\|x^{(3)}(t)\|^2\le & \max\big((\g+1)^2,(\g+2)((1+\l\b)^2+1)\big)\|\dot{x}(t)\|^2+ \nonumber\\
& (\max\big((\g+1)^2,(\g+2)((1+\l\b)^2+1)\big)-1)\|\ddot{x}(t)\|^2.
\end{align}

Similarly, by  using $L_2$, one obtains for almost every $t\in[0,+\infty)$
\begin{align}\label{third-deriv1}
\|x^{(3)}(t)\|^2\le & \max((\g+1)^2+\g\l\b,(2+\l\b)^2+\g(2+\l\b))\|\dot{x}(t)\|^2+ \nonumber \\
& (\max((\g+1)^2+\g\l\b,(2+\l\b)^2+\g(2+\l\b))-1)\|\ddot{x}(t)\|^2.
\end{align}

\end{remark}
\begin{remark} Obviously, $L_1>2$ and $L_2>2.$ One can easily verify that $L_2\le L_1$, provided $\g\le\sqrt{3}.$ Moreover, if $\g\le\sqrt{3},$ then
 $$L_2=\sqrt{(2+\l\b)^2+\g(2+\l\b)}.$$
 However, for $\g>\sqrt{3}$, one may have $L_2>L_1$ and also $L_2<L_1.$ Indeed, for $\g=2$ and $\l\b=\frac{1}{10},$ it holds
 $$L_2=\sqrt{9,2}>3  =L_1,$$ 
while for $\g=2$ and $\l\b=1$ it holds
 $$L_2=\sqrt{15}<\sqrt{20}=L_1.$$
\end{remark}

\section{Asymptotic analysis}\label{sec4}

In this section we will address the asymptotic behaviour of the trajectory generated by the second order dynamical system \eqref{dysy}. We begin the analysis with some technical results. 

\begin{lemma}\label{l8} Suppose that $f+g$ is bounded from bellow and $\g,\l>0$ satisfy the following set of conditions:
 $$(\rho)\quad \left\{\begin{array}{lll}
 A=\ds-\frac12\frac\g\l+\frac{\b}{2}(L^2+2\g^2+1)<0\\
 B=\ds-\frac{1}{2L^2}\frac\g\l+\frac{\b}{2}(L^2+\g^2+1)<0\\
 C=\ds-\frac{(2L^2+1)}{ (L^2+1)^2}\g^2+3\b\g\l-1<0,
 \end{array} \right.$$
where  $L:=\min(L_1,L_2)$, and $L_1,L_2$ were defined as
$$L_1=\sqrt{\max\big((\g+1)^2,(\g+2)((1+\l\b)^2+1)\big)}$$ 
and 
$$L_2=\sqrt{\max((\g+1)^2+\g\l\b,(2+\l\b)^2+\g(2+\l\b))}.$$
 For $u_0,v_0\in\R^n$, let $x\in C^2([0,+\infty),\R^n)$ be the unique global solution of \eqref{dysy}. Then the following statements are true
\begin{itemize}
  \item[(a)] $\dot{x}\in L^2([0,+\infty),\R^n)$ and $\lim_{t\To+\infty}\dot{x}(t)=0$;
  \item[(b)] $\ddot{x}\in L^2([0,+\infty),\R^n)$ and $\lim_{t\To+\infty}\ddot{x}(t)=0$;
  \item[(c)] $\exists\,\lim_{t\To+\infty}(f+g)(\ddot{x}(t)+\g\dot{x}(t)+x(t))\in\R.$
\end{itemize}
\end{lemma}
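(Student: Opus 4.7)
The plan is to center everything around the auxiliary trajectory $y(t) := \ddot x(t) + \gamma \dot x(t) + x(t)$. By \eqref{dysy}, $y(t) = \prox_{\lambda f}\bigl(x(t) - \lambda \nabla g(x(t))\bigr)$, so the optimality characterization of the proximal map gives
$$\frac{1}{\lambda}\bigl(x(t) - y(t)\bigr) - \nabla g(x(t)) \in \partial f\bigl(y(t)\bigr),$$
and hence, after adding $\nabla g(y(t))$,
$$\nabla g(y(t)) - \nabla g(x(t)) - \tfrac{1}{\lambda}\bigl(y(t) - x(t)\bigr) \in \partial(f+g)\bigl(y(t)\bigr).$$
Since $x \in C^2$ by Theorem \ref{uniq} and $x^{(3)}$ exists almost everywhere with the bound \eqref{third-deriv}, the map $y$ is locally absolutely continuous with $\dot y = x^{(3)} + \gamma \ddot x + \dot x$ a.e., and the hypotheses of Br\'ezis' Lemma \ref{deriv} are satisfied on every compact interval. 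Applied to the convex function $f$ and combined with the smooth chain rule for $g$, this yields
$$\frac{d}{dt}(f+g)\bigl(y(t)\bigr) = -\frac{1}{\lambda}\bigl\langle \dot y(t),\, y(t) - x(t)\bigr\rangle + \bigl\langle \dot y(t),\, \nabla g(y(t)) - \nabla g(x(t))\bigr\rangle \quad \text{a.e.}$$

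\textbf{Lyapunov function and descent.} Using $y - x = \ddot x + \gamma \dot x$ and the identities $2\langle x^{(3)}, \ddot x\rangle = \frac{d}{dt}\|\ddot x\|^2$, $2\langle \ddot x, \dot x\rangle = \frac{d}{dt}\|\dot x\|^2$ and $\langle x^{(3)}, \dot x\rangle = \frac{d}{dt}\langle \ddot x, \dot x\rangle - \|\ddot x\|^2$, the first inner product reorganizes into a sum of total time derivatives plus a residual $-\tfrac{\gamma}{\lambda}\|\dot x\|^2$ (the explicit $\|\ddot x\|^2$ contributions cancel). Transferring these total derivatives to the left-hand side produces a Lyapunov function of the form
$$E(t) = (f+g)\bigl(y(t)\bigr) + Q\bigl(\dot x(t), \ddot x(t)\bigr),$$
where $Q$ augments the natural quadratic form $\frac{1}{2\lambda}\|\ddot x\|^2 + \frac{\gamma}{\lambda}\langle \ddot x, \dot x\rangle + \frac{\gamma^2+1}{2\lambda}\|\dot x\|^2$ by an auxiliary correction engineered to contribute a strictly negative $\|\ddot x\|^2$ coefficient in $\frac{dE}{dt}$. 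The second inner product is estimated by the $\beta$-Lipschitz property of $\nabla g$, Cauchy--Schwarz and Young's inequality, producing an upper bound in $\|\dot x\|^2$, $\|\ddot x\|^2$, $\|x^{(3)}\|^2$; the a.e.\ estimate \eqref{third-deriv} (or \eqref{third-deriv1} via $L_2$) then eliminates $\|x^{(3)}\|^2$ in favour of $\|\dot x\|^2$ and $\|\ddot x\|^2$. The three inequalities $A, B, C < 0$ of $(\rho)$ are exactly what this accounting demands at once: $A$ and $B$ ensure
$$\frac{d}{dt}E(t) \le A\,\|\ddot x(t)\|^2 + B\,\|\dot x(t)\|^2 \le 0 \quad \text{a.e.},$$
while $C$ is the compatibility condition that keeps the augmented $Q$ coercive, so that $E$ is bounded below by $\inf(f+g) > -\infty$.

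\textbf{Conclusion and main difficulty.} Integrating the descent inequality on $[0,+\infty)$ gives $\dot x, \ddot x \in L^2([0,+\infty),\R^n)$, establishing the $L^2$ parts of (a) and (b). For the pointwise limits I apply Lemma \ref{fejer-cont2} with $p = 1$: for $F = \|\dot x\|^2 \in L^1$ one has $\tfrac{dF}{dt} = 2\langle \ddot x, \dot x\rangle \in L^1$ by Cauchy--Schwarz and $\dot x, \ddot x \in L^2$, so $\dot x(t) \to 0$; for $F = \|\ddot x\|^2 \in L^1$, the bound \eqref{third-deriv} first upgrades $\dot x, \ddot x \in L^2$ to $x^{(3)} \in L^2$, and then $\tfrac{d}{dt}\|\ddot x\|^2 = 2\langle x^{(3)}, \ddot x\rangle \in L^1$ yields $\ddot x(t) \to 0$. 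Statement (c) follows because $E$ is nonincreasing and bounded below, so $\lim_{t \to +\infty} E(t) \in \R$; since by (a) and (b) the quadratic correction $Q(\dot x(t), \ddot x(t))$ tends to $0$, the limit $\lim_{t \to +\infty}(f+g)(y(t)) \in \R$ follows. The main technical obstacle is the descent step: one must choose the auxiliary correction inside $Q$ so that a single chain of Young-type estimates delivers all three conditions of $(\rho)$ simultaneously, $C$ serving as the compatibility constraint that also guarantees coercivity of $Q$ and $A$, $B$ as the coefficients of the a.e.\ descent.
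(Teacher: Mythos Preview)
Your proposal is correct and follows essentially the same route as the paper: differentiate $(f+g)(y(t))$ via Br\'ezis' lemma, absorb the exact terms into a quadratic Lyapunov functional in $(\dot x,\ddot x)$, control the $\nabla g$ cross-term and the third derivative through \eqref{third-deriv}/\eqref{third-deriv1}, and conclude via Lemmas \ref{fejer-cont1} and \ref{fejer-cont2}. Two small remarks: in the paper's bookkeeping the descent inequality reads $\frac{d}{dt}E\le A\|\dot x\|^2+B\|\ddot x\|^2$ (you have the labels interchanged), and the ``auxiliary correction'' you allude to is realized concretely by splitting $-\tfrac{\gamma}{\lambda}\langle x^{(3)},\dot x\rangle$ with a parameter $c\in(0,1)$ and estimating the $(1-c)$ portion via Young and the $x^{(3)}$ bound---the explicit choice $c=\tfrac{L^2}{L^2+1}$ is exactly what produces the constants $A,B,C$ of condition $(\rho)$ and the completed-square form $E=(f+g)(y)+\tfrac{1}{2\lambda}\|\ddot x+c\gamma\dot x\|^2-\tfrac{C}{2\lambda}\|\dot x\|^2$ whose coercivity is guaranteed by $C<0$.
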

\begin{proof} Let $T>0.$ Since $x\in C^2([0,T],\R^n)$, we have $x,\dot{x},\ddot{x}\in L^2([0,T],\R^n).$ Further, by the $\b$-Lipschitz property of $\n g$ we have $\n g\in L^2([0,T],\R^n).$ Moreover, \eqref{third-deriv} ensures that $x^{(3)}\in L^2([0,T],\R^n).$

According to \eqref{dysy}, we have $\ddot{x}(t)+\g\dot{x}(t)+x(t)=\prox_{\l f}\big(x(t)-\l \nabla g(x(t))\big)$ for all $t\in[0,+\infty)$, hence
\begin{equation}\label{inc}
-\frac1\l\ddot{x}(t)-\frac\g\l\dot{x}(t)-\n g(x(t))\in\p f(\ddot{x}(t)+\g\dot{x}(t)+x(t)).
\end{equation}
On the other hand, $\xi(t)=-\frac1\l\ddot{x}(t)-\frac\g\l\dot{x}(t)-\n g(x(t))\in L^2([0,T],\R^n)$, hence by Lemma \ref{deriv} we have that $t\To f(\ddot{x}(t)+\g\dot{x}(t)+x(t))$ is absolutely continuous and
\begin{equation}
\frac{d}{dt}f(\ddot{x}(t)+\g\dot{x}(t)+x(t))=\left\< x^{(3)}(t)+\g\ddot{x}(t)+\dot{x}(t),-\frac1\l\ddot{x}(t)-\frac\g\l\dot{x}(t)-\n g(x(t))\right\>
\end{equation}
for almost every $t\in[0,T].$

Obviously,
\begin{equation}
\frac{d}{dt}g(\ddot{x}(t)+\g\dot{x}(t)+x(t))=\left\< x^{(3)}(t)+\g\ddot{x}(t)+\dot{x}(t),\n g( \ddot{x}(t)+\g\dot{x}(t)+x(t))\right\>
\end{equation}
for almost every $t\in[0,T].$
By summing up the last two equalities we get
\begin{align*}
& \frac{d}{dt}(f+g)(\ddot{x}(t)+\g\dot{x}(t)+x(t))=\\
&\left\< x^{(3)}(t)+\g\ddot{x}(t)+\dot{x}(t),\n g( \ddot{x}(t)+\g\dot{x}(t)+x(t))-\n g(x(t))-\frac1\l\ddot{x}(t)-\frac\g\l\dot{x}(t)\right\>=\\
&-\frac{1}{2\l}\frac{d}{dt}\|\ddot{x}(t)\|^2-\frac{1+\g^2}{2\l}\frac{d}{dt}\|\dot{x}(t)\|^2-\frac{\g}{\l}\|\dot{x}(t)\|^2-\frac{\g}{\l}\|\ddot{x}(t)\|^2
-\frac{\g}{\l}\<x^{(3)}(t),\dot{x}(t)\>+\\
&\left\< x^{(3)}(t)+\g\ddot{x}(t)+\dot{x}(t),\n g( \ddot{x}(t)+\g\dot{x}(t)+x(t))-\n g(x(t))\right\>
\end{align*}
for almost every $t\in[0,T].$ It is easy to check that $\<x^{(3)}(t),\dot{x}(t)\>=\frac12\cdot\frac{d^2}{dt^2}\|\dot{x}(t)\|^2-\|\ddot{x}(t)\|^2$ for almost every $t\in[0,+\infty)$. Let $c\in(0,1).$ We have $$-\frac{\g}{\l}\<x^{(3)}(t),\dot{x}(t)\>=-c\frac{\g}{\l}\<x^{(3)}(t),\dot{x}(t)\>-(1-c)\frac{\g}{\l}\<x^{(3)}(t),\dot{x}(t)\>$$ and
$$-(1-c)\frac{\g}{\l}\<x^{(3)}(t),\dot{x}(t)\>\le \left(a\|x^{(3)}(t)\|^2+b\|\dot{x}(t)\|^2\right),$$ where $ab=\frac{\g^2(1-c)^2}{4\l^2},$ hence by using \eqref{third-deriv} and \eqref{third-deriv1} one obtains that for almost every $t\in[0,+\infty)$ 
$$-(1-c)\frac{\g}{\l}\<x^{(3)}(t),\dot{x}(t)\>\le (a L^2+b) \|\dot{x}(t)\|^2+a(L^2-1)\|\ddot{x}(t)\|^2.$$

Consequently,  for almost every $t\in[0,+\infty)$ we have
\begin{equation}\label{prodcoef}
-\frac{\g}{\l}\<x^{(3)}(t),\dot{x}(t)\>\le -\frac{c\g}{2\l}\cdot\frac{d^2}{dt^2}\|\dot{x}(t)\|^2+(a L^2+b)\|\dot{x}(t)\|^2+\left(aL^2+\frac{c\g}{\l}-a\right)\|\ddot{x}(t)\|^2.
\end{equation}

Further, for almost every $t\in[0,+\infty)$ we have
\begin{align*}
\left\< x^{(3)}(t)+\g\ddot{x}(t)+\dot{x}(t),\n g( \ddot{x}(t)+\g\dot{x}(t)+x(t))-\n g(x(t))\right\> & \le\\
\b\|\ddot{x}(t)+\g\dot{x}(t)\|\| x^{(3)}(t)+\g\ddot{x}(t)+\dot{x}(t)\| & \le \\
\b(\|\ddot{x}(t)+\g\dot{x}(t)\|\|x^{(3)}(t)\|+\|\ddot{x}(t)+\g\dot{x}(t)\|\|\g\ddot{x}(t)+\dot{x}(t)\|) & \le\\
\frac{\b}{2}(2\| \ddot{x}(t)+\g\dot{x}(t)\|^2+\|x^{(3)}(t)\|^2+\|\g\ddot{x}(t)+\dot{x}(t)\|^2) & =\\
\frac{\b}{2}\big((2+\g^2)\|\ddot{x}(t)\|^2+(2\g^2+1)\|\dot{x}(t)\|^2+\|x^{(3)}(t)\|^2+6\g\<\ddot{x}(t),\dot{x}(t)\>\big) & =\\
\frac{\b}{2}\left((2+\g^2)\|\ddot{x}(t)\|^2+(2\g^2+1)\|\dot{x}(t)\|^2+\|x^{(3)}(t)\|^2+3\g\frac{d}{dt}\|\dot{x}(t)\|^2\right).&
\end{align*}

By using \eqref{third-deriv} and  \eqref{third-deriv1} one obtains for almost every $t\in[0,T]$ 
$$\|x^{(3)}(t)\|^2\le L^2\|\dot{x}(t)\|^2+(L^2-1)\|\ddot{x}(t)\|^2,$$
hence
\begin{align*}
\left\< x^{(3)}(t)+\g\ddot{x}(t)+\dot{x}(t),\n g( \ddot{x}(t)+\g\dot{x}(t)+x(t))-\n g(x(t))\right\> & \le\\
\frac{\b}{2}( L^2+\g^2+1)\|\ddot{x}(t)\|^2+\frac{\b}{2}( L^2+2\g^2+1)\|\dot{x}(t)\|^2+3\frac{\b}{2}\g\frac{d}{dt}\|\dot{x}(t)\|^2.&
\end{align*}

Consequently, for almost every $t\in[0,T]$ we have
\begin{align*}
\frac{d}{dt}(f+g)( \ddot{x}(t)+\g\dot{x}(t)+x(t))+\frac{1}{2\l}\frac{d}{dt}\|\ddot{x}(t)\|^2+\frac{(1+\g^2)-3\l\b\g}{2\l}\frac{d}{dt}\|\dot{x}(t)\|^2+
\frac{c\g}{2\l}\cdot\frac{d^2}{dt^2}\|\dot{x}(t)\|^2 & \le \\
\left((c-1)\frac{\g}{\l}+aL^2-a+\frac{\b}{2}(L^2+\g^2+1)\right)\|\ddot{x}(t)\|^2+\left(-\frac\g\l+a L^2+b+\frac{\b}{2}( L^2+2\g^2+1)\right)\|\dot{x}(t)\|^2.&
\end{align*}

Recall that $a,b$ and $c$ have been arbitrarily chosen such that $c\in(0,1)$  and $ab=\frac{\g^2(1-c)^2}{4\l^2}.$ 

We chose 
$$c:=\frac{L^2}{L^2+1},\,a:=\frac{\g}{2(L^2+1)L^2\l} \ \mbox{and} \ b:=\frac{L^2\g}{2(L^2+1)\l}.$$

Then, for almost every $t\in[0,T]$ we have
\begin{align}\label{ederiv}
\frac{d}{dt}\left[(f+g)( \ddot{x}(t)+\g\dot{x}(t)+x(t))+\frac{1}{2\l}\|\ddot{x}(t)\|^2+\frac{c^2\g^2-C}{2\l}\|\dot{x}(t)\|^2+ \frac{2 c\g}{2\l}\<\ddot{x}(t),\dot{x}(t)\>\right] & \le \nonumber\\
A\|\dot{x}(t)\|^2+B\|\ddot{x}(t)\|^2. &
\end{align}

By integration we get
\begin{align*}
(f+g)( \ddot{x}(T)+\g\dot{x}(T)+x(T))+\frac{1}{2\l}\|\ddot{x}(T)\|^2+\frac{c^2\g^2-C}{2\l}\|\dot{x}(T)\|^2+
\frac{2 c\g}{2\l}\<\ddot{x}(T),\dot{x}(T)\> & \le\\
(f+g)(\ddot{x}(0)+\g\dot{x}(0)+x(0))+\frac{1}{2\l}\|\ddot{x}(0)\|^2+\frac{c^2\g^2-C}{2\l}\|\dot{x}(0)\|^2+
\frac{2 c\g}{2\l}\<\ddot{x}(0),\dot{x}(0)\>+ & \\
A\int_0^T \|\dot{x}(t)\|^2 dt+B\int_0^T \|\ddot{x}(t)\|^2 dt.&
\end{align*}

In other words,
\begin{align}\label{integ}
(f+g)( \ddot{x}(T)+\g\dot{x}(T)+x(T))+\frac{1}{2\l}\|\ddot{x}(T)+c\g\dot{x}(T)\|^2-\frac{C}{2\l}\|\dot{x}(T)\|^2 & \le \nonumber\\
(f+g)( \ddot{x}(0)+\g\dot{x}(0)+x(0))+\frac{1}{2\l}\|\ddot{x}(0)+c\g\dot{x}(0)\|^2-\frac{C}{2\l}\|\dot{x}(0)\|^2+ & \nonumber \\
A\int_0^T \|\dot{x}(t)\|^2 dt+B\int_0^T \|\ddot{x}(t)\|^2 dt.& 
\end{align}
By using that $A<0,\,B<0,\,C<0$ and $f+g$ is bounded from below, and by taking into account that $T>0$ has been arbitrary chosen, we obtain that
$\dot{x},\,\ddot{x}\in L^2([0,+\infty),\R^n).$ Moreover, from \eqref{third-deriv} we obtain that $x^{(3)}\in L^2([0,+\infty),\R^n).$

Now, by using Lemma \ref{fejer-cont2} and the fact that for almost every $t\in[0,+\infty)$ we have
$$\frac{d}{dt}\|\dot{x}(t)\|^2=2\<\dot{x}(t),\ddot{x}(t)\>\le\|\dot{x}(t)\|^2+\|\ddot{x}(t)\|^2$$
and
$$\frac{d}{dt}\|\ddot{x}(t)\|^2=2\<\ddot{x}(t),{x}^{(3)}(t)\>\le\|\ddot{x}(t)\|^2+\|{x}^{(3)}(t)\|^2,$$
we obtain that $\lim_{t\To+\infty}\dot{x}(t)=0$ and $\lim_{t\To+\infty}\ddot{x}(t)=0.$

Since $T>0$ has been arbitrary chosen, we get from \eqref{ederiv}  that for almost every $t\in[0,+\infty)$ 
$$\frac{d}{dt}\left[(f+g)( \ddot{x}(t)+\g\dot{x}(t)+x(t))+\frac{1}{2\l}\|\ddot{x}(t)\|^2+\frac{c^2\g^2-C}{2\l}\|\dot{x}(t)\|^2+
\frac{c\g}{\l}\<\ddot{x}(t),\dot{x}(t)\>\right]\le 0.$$
Now using Lemma \ref{fejer-cont1} we obtain that the limit
$$\lim_{t\To+\infty}\left[(f+g)( \ddot{x}(t)+\g\dot{x}(t)+x(t))+\frac{1}{2\l}\|\ddot{x}(t)\|^2+\frac{c^2\g^2-C}{2\l}\|\dot{x}(t)\|^2+
\frac{ c\g}{\l}\<\ddot{x}(t),\dot{x}(t)\>\right]$$
 exists and is finite. Since
$$\lim_{t\To+\infty}\left[\frac{1}{2\l}\|\ddot{x}(t)\|^2+\frac{c^2\g^2-C}{2\l}\|\dot{x}(t)\|^2+
\frac{c\g}{\l}\<\ddot{x}(t),\dot{x}(t)\>\right]=0,$$
one obtains that
$$\lim_{t\To+\infty}(f+g)( \ddot{x}(t)+\g\dot{x}(t)+x(t))\in\R.$$
\end{proof}

\begin{remark} The choice $\g\l\b\le \frac{1}{3}$ guarantees that $C<0.$ Moreover, in this case $B> A.$ Indeed,
$$B-A=\frac{\g}{2\l}\left(1-\frac{1}{L^2}-\g\l\b\right)\ge \frac{\g}{2\l}\left(\frac23-\frac{1}{L^2}\right)> 0.$$
\end{remark}

\begin{corollary} Suppose that $f+g$ is bounded from bellow and $\sqrt{3}\ge\g>0,\l>0$ satisfy the following  condition
$$\ds -\frac{1}{(2+\l\b)^2+\g(2+\l\b)}\frac\g\l+\b((2+\l\b)^2+\g(2+\l\b)+\g^2+1)<0.$$
 For $u_0,v_0\in\R^n$, let $x\in C^2([0,+\infty),\R^n)$ be the unique global solution of \eqref{dysy}. Then the following statements are true
\begin{itemize}
  \item[(a)] $\dot{x}\in L^2([0,+\infty),\R^n)$ and $\lim_{t\To+\infty}\dot{x}(t)=0$;
  \item[(b)] $\ddot{x}\in L^2([0,+\infty),\R^n)$ and $\lim_{t\To+\infty}\ddot{x}(t)=0$;
  \item[(c)] $\exists\,\lim_{t\To+\infty}(f+g)(\ddot{x}(t)+\g\dot{x}(t)+x(t))\in\R.$
\end{itemize}
\end{corollary}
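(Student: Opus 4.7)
The plan is to reduce the corollary directly to Lemma \ref{l8}, by showing that under the extra hypothesis $\gamma\leq\sqrt{3}$ the single displayed inequality implies all three conditions $(A),(B),(C)$ in the set $(\rho)$. First I would invoke the remark just before Lemma \ref{l8} (the one comparing $L_1$ and $L_2$): for $\gamma\leq\sqrt{3}$ one has $L_2\leq L_1$ and, moreover, $L_2=\sqrt{(2+\lambda\beta)^2+\gamma(2+\lambda\beta)}$. Hence the quantity $L=\min(L_1,L_2)$ appearing in $(\rho)$ is explicitly $L=L_2$ and in particular $L^2=(2+\lambda\beta)^2+\gamma(2+\lambda\beta)$.

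Substituting this value of $L^2$ into condition $(B)$ of Lemma \ref{l8} shows that the hypothesis of the corollary is, after dividing by $2$, precisely the statement $B<0$. It remains to verify that this one inequality forces $A<0$ and $C<0$ as well. Since $L^2\geq (2+\lambda\beta)^2\geq 4$, the inequality $-\frac{1}{L^2}\frac{\gamma}{\lambda}+\beta(L^2+\gamma^2+1)<0$ yields
\[
\gamma\lambda\beta < \frac{\gamma^2}{L^2(L^2+\gamma^2+1)} \leq \frac{\gamma^2}{L^4}\leq \frac{3}{16}<\frac{1}{3}.
\]
Now the remark following Lemma \ref{l8} provides the two pieces I need for free: (i) $\gamma\lambda\beta\leq 1/3$ implies $C<0$, and (ii) under the same bound one has $B-A=\frac{\gamma}{2\lambda}(1-\frac{1}{L^2}-\gamma\lambda\beta)\geq \frac{\gamma}{2\lambda}(\frac{2}{3}-\frac{1}{L^2})>0$, so $A<B<0$.

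With $(A),(B),(C)$ all verified, Lemma \ref{l8} applies to the unique global solution $x$ of \eqref{dysy} and yields exactly the three conclusions (a), (b), (c) of the corollary. The only technical point is the chain of elementary estimates that turns $B<0$ into a bound of the form $\gamma\lambda\beta<1/3$; everything else is bookkeeping via the two remarks already recorded in the paper. I do not expect any genuine obstacle beyond being careful with the identification $L=L_2$, which is the reason for restricting to $\gamma\leq\sqrt{3}$.
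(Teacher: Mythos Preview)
Your proposal is correct and follows exactly the paper's approach: identify $L=L_2$ via the remark that $L_2\leq L_1$ when $\gamma\leq\sqrt3$, recognize the displayed hypothesis as $2B<0$, then use $B<0$ to force $\gamma\lambda\beta<\tfrac{1}{3}$, and finally invoke the remark after Lemma~\ref{l8} to get $C<0$ and $A<B<0$, so Lemma~\ref{l8} applies. In fact you supply the explicit chain of inequalities yielding $\gamma\lambda\beta<\tfrac{3}{16}<\tfrac{1}{3}$, which the paper only asserts (``it can be proved that $\gamma\lambda\beta\leq\tfrac{1}{3}$'') without detail.
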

\begin{proof} The condition $\g\le\sqrt{3}$ ensures that $L=\sqrt{(2+\l\b)^2+\g(2+\l\b)},$ hence $$2B=-\frac{1}{(2+\l\b)^2+\g(2+\l\b)}\frac\g\l+\b((2+\l\b)^2+\g(2+\l\b)+\g^2+1)<0.$$ 
Under these auspicies, it can proved that  $\g\l\b\le\frac13,$ hence, according to the previous remark, $C<0$ and $A<0$. The statement follows from  Lemma \ref{l8}.
\end{proof}

\begin{lemma}\label{l9} Assume that $f+g$ is bounded from below and $\g,\l$ satisfy the set of conditions $(\rho).$ For $u_0,v_0\in\R^n$, let $x\in C^2([0,+\infty),\R^n)$ be the unique global solution of \eqref{dysy}. Then the set of limit points of $x$, which we denote by  $\omega(x)$, is a subset of the set of critical points of $f+g.$ In other words,
$$\omega(x):=\{\ol{x}\in\R^n:\exists t_k\To\infty\mbox{ such that }x(t_k)\To\ol{x},\,k\To + \infty\}\subseteq\crit(f+g).$$
\end{lemma}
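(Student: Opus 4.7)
The plan is to fix an arbitrary $\ol{x}\in\omega(x)$ with $x(t_k)\to\ol{x}$ along some $t_k\to+\infty$, and to invoke the closedness of the graph of the limiting subdifferential applied to $f+g$ in order to conclude $0\in\p(f+g)(\ol{x})$.

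First I would rewrite the subdifferential inclusion \eqref{inc} established in the proof of Lemma \ref{l8} by adding $\n g(\ddot{x}(t)+\g\dot{x}(t)+x(t))$ to both sides and using the sum rule for the smooth perturbation $g$ recalled in Section \ref{sec2}, to obtain
$$v(t):=\n g(y(t))-\n g(x(t))-\frac{1}{\l}\ddot{x}(t)-\frac{\g}{\l}\dot{x}(t)\in\p(f+g)(y(t)),$$
where $y(t):=\ddot{x}(t)+\g\dot{x}(t)+x(t)$. By Lemma \ref{l8}(a)-(b) one has $\dot{x}(t_k)\to 0$ and $\ddot{x}(t_k)\to 0$, so that $y(t_k)\to\ol{x}$, and the $\b$-Lipschitz property of $\n g$ together with $y(t_k)-x(t_k)=\ddot{x}(t_k)+\g\dot{x}(t_k)\to 0$ forces $v(t_k)\to 0$.

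The main obstacle is the function-value convergence $(f+g)(y(t_k))\to(f+g)(\ol{x})$ required by the closedness criterion. Continuity of $g$ handles the smooth part at once, but $f$ is only lower semicontinuous, so an independent upper bound is needed. To produce it I would exploit that $y(t)=\prox_{\l f}(x(t)-\l\n g(x(t)))$: the minimality property defining the proximal map, tested against $\ol{x}$, yields
$$f(y(t_k))+\frac{1}{2\l}\|y(t_k)-x(t_k)+\l\n g(x(t_k))\|^2\le f(\ol{x})+\frac{1}{2\l}\|\ol{x}-x(t_k)+\l\n g(x(t_k))\|^2.$$
Since $y(t_k)-x(t_k)\to 0$, $x(t_k)\to\ol{x}$ and $\n g$ is continuous, both squared norms converge to $\l^2\|\n g(\ol{x})\|^2$, hence $\limsup_{k\to+\infty}f(y(t_k))\le f(\ol{x})$. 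Combined with lower semicontinuity this gives $f(y(t_k))\to f(\ol{x})$, and adding the trivial $g(y(t_k))\to g(\ol{x})$ yields the desired convergence.

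With $y(t_k)\to\ol{x}$, $v(t_k)\in\p(f+g)(y(t_k))$ tending to $0$, and $(f+g)(y(t_k))\to(f+g)(\ol{x})$ in place, the closedness criterion for the graph of $\p(f+g)$ stated in Section \ref{sec2} delivers $0\in\p(f+g)(\ol{x})$, i.e.\ $\ol{x}\in\crit(f+g)$, as required.
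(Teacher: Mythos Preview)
Your proposal is correct and follows essentially the same approach as the paper: both arguments pass from \eqref{inc} to an element of $\p(f+g)(\ddot{x}(t_k)+\g\dot{x}(t_k)+x(t_k))$ via the sum rule, use Lemma \ref{l8} to show this element tends to $0$, establish the function-value convergence by combining lower semicontinuity of $f$ with the proximal minimality inequality tested at $\ol{x}$, and conclude via the closedness criterion for the limiting subdifferential. The only cosmetic difference is that the paper first expands the squared norm in the prox definition before comparing at $\ol{x}$, whereas you keep it in the compact form $\|y-x(t_k)+\l\n g(x(t_k))\|^2$; the resulting $\limsup$ bound is identical.
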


\begin{proof} Let $\ol{x}\in\omega(x)$ and $t_k\To +\infty\mbox{ such that }x(t_k)\To\ol{x},\,k\To+\infty$. 
We have to show that $0\in\p(f+g)(\ol{x}).$ From \eqref{inc} we have for every $k \geq 0$
$$-\frac1\l\ddot{x}(t_k)-\frac\g\l\dot{x}(t_k)-\n g(x(t_k))\in\p f(\ddot{x}(t_k)+\g\dot{x}(t_k)+x(t_k))$$
hence,
\begin{align*}
v_k=-\frac1\l\ddot{x}(t_k)-\frac\g\l\dot{x}(t_k)-\n g(x(t_k))+\n g(\ddot{x}(t_k)+\g\dot{x}(t_k)+x(t_k)) & \in\\\
\p f(\ddot{x}(t_k)+ \g\dot{x}(t_k)+x(t_k))+\n g(\ddot{x}(t_k)+\g\dot{x}(t_k)+x(t_k)) & =\\
\p(f+g)(\ddot{x}(t_k)+\g\dot{x}(t_k)+x(t_k)) & =\p(f+g)(u_k),
\end{align*}
where $u_k:=\ddot{x}(t_k)+\g\dot{x}(t_k)+x(t_k)$.

According to Lemma \ref{l8}, $\lim_{k\To+\infty}\dot{x}(t_k)=0$ and $\lim_{k\To+\infty}\ddot{x}(t_k)=0.$ Further, $\n g$ is continuous, hence
 $\lim_{k\To+\infty}[-\n g(x(t_k))+\n g(\ddot{x}(t_k)+\g\dot{x}(t_k)+x(t_k))]=-\n g(\ol{x})+\n g(\ol{x})=0.$ Consequently,
 $$\lim_{k\To+\infty}(u_k, v_k)=(\ol{x},0).$$
We show that $\lim_{k\To+\infty}(f+g)(u_k)=(f+g)(\ol{x}).$ Since $f$ is lower semicontinuous, one has
$$\liminf_{k\To+\infty}f(u_k)\ge f(\ol{x}).$$

Further  we have for every $k \geq 0$
\begin{align*}
u_k=\ddot{x}(t_k)+\g\dot{x}(t_k)+x(t_k)=\prox\nolimits_{\l f}\big(x(t_k)-\l \nabla g(x(t_k))\big) & =\\
\argmin_{y\in\R^n}\left[f(y)+\frac{1}{2\l}\|y-(x(t_k)-\l\n g(x(t_k))\|^2\right] & =\\
\argmin_{y\in\R^n}\left[f(y)+\frac{1}{2\l}\|y-x(t_k)\|^2+\<y-x(t_k),\n g(x(t_k))\>+\frac{\l}{2}\|\n g(x(t_k))\|^2\right] & =\\
\argmin_{y\in\R^n}\left[f(y)+\frac{1}{2\l}\|y-x(t_k)\|^2+\<y-x(t_k),\n g(x(t_k))\>\right].&
\end{align*}

Hence, for every $k \geq 0$ we have
$$f(u_k)+\frac{1}{2\l}\|u_k-x(t_k)\|^2+\<u_k-x(t_k),\n g(x(t_k))\>\le f(\ol{x})+\frac{1}{2\l}\|\ol{x}-x(t_k)\|^2+\<\ol{x}-x(t_k),\n g(x(t_k))\>.$$
Taking the limit superior as $k\To+\infty$, we obtain
$$\limsup_{k\To+\infty}f(u_k)\le f(\ol{x}).$$
This shows that $\lim_{k\To+\infty}f(u_k)= f(\ol{x})$ and, since $g$ is continuous,  we obtain
$$\lim_{k\To+\infty}(f+g)(u_k)= (f+g)(\ol{x}).$$
By the  closedness criterion of the graph of the limiting subdifferential it follows that
$$0\in\p(f+g)(\ol{x}).$$
\end{proof}

\begin{lemma}\label{l10} Assume that $f+g$ is bounded from below and $\g,\l$ satisfy the set of conditions $(\rho)$, and let the constants $L, A, B$ and $C$ be defined as in Lemma \ref{l8}. For $u_0,v_0\in\R^n$, let $x\in C^2([0,+\infty),\R^n)$  be the unique global solution of \eqref{dysy}. Consider the function
$$H:\R^n\times \R^n\times \R^n\To\R\cup\{+\infty\},\,H(u,v,w)=(f+g)(u)+\frac{1}{2\l}\|u-v\|^2-\frac{C}{2\l}\|w\|^2.$$
Then the following statements are true

(H$_1$) for almost every $t\in[0,+\infty)$ it holds
$$\frac{d}{dt}\left(H(\ddot{x}(t)+\g\dot{x}(t)+x(t),\g(1-c)\dot{x}(t)+x(t),\dot{x}(t))\right)\le0$$
and the limit $$\lim_{t\To+\infty}H(\ddot{x}(t)+\g\dot{x}(t)+x(t),\g(1-c)\dot{x}(t)+x(t),\dot{x}(t))$$
exists and is finite, where $c=\frac{L^2}{L^2+1}$;

(H$_2$) for almost every $t\in[0,+\infty)$ and for every $a\ge 0$ we have
\begin{align*}
w(t)=\left(-\n g(x(t))+\n g(\ddot{x}(t)+\g\dot{x}(t)+x(t))-\frac1\l a\g\dot{x}(t)),-\frac1\l(\ddot{x}(t)+(1-a)\g\dot{x}(t)),-\frac{C}{\l}\dot{x}(t)\right) & \in\\
\p H(\ddot{x}(t)+\g\dot{x}(t)+x(t),\g a\dot{x}(t)+x(t),\dot{x}(t)) &
\end{align*}
and
$$\|w(t)\|\le\left(\b+\frac1\l\right)\|\ddot{x}(t)\|+\frac{\b\l\g+(2a+1)\g-C}{\l}\|\dot{x}(t)\|;$$

(H$_3$) for $\ol{x}\in\omega(x)$ and $t_k\To+\infty$ such that $x(t_k)\To\ol{x}$ as $k\To+\infty$, and for every $a\ge 0$ we have
$$ H(\ddot{x}(t_k)+\g\dot{x}(t_k)+x(t_k),a\g\dot{x}(t)+x(t_k),\dot{x}(t_k))\To H(\ol{x},\ol{x},0) \ \mbox{as} \ k\To+\infty.$$
\end{lemma}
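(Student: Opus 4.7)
The plan is to establish each of the three assertions by careful bookkeeping, leveraging computations already completed in Lemma \ref{l8} and Lemma \ref{l9}, together with the standard calculus of the limiting subdifferential. No new conceptual ingredients beyond the energy estimate \eqref{ederiv} and the inclusion \eqref{inc} are required.

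For $(H_1)$, I would substitute the triple $(u,v,w)=(\ddot{x}(t)+\g\dot{x}(t)+x(t),\,\g(1-c)\dot{x}(t)+x(t),\,\dot{x}(t))$ directly into $H$. The key observation is that $u-v=\ddot{x}(t)+c\g\dot{x}(t)$, so expanding $\|u-v\|^2$ yields
\[
H(u,v,w)=(f+g)(\ddot{x}(t)+\g\dot{x}(t)+x(t))+\frac{1}{2\l}\|\ddot{x}(t)\|^2+\frac{c\g}{\l}\<\ddot{x}(t),\dot{x}(t)\>+\frac{c^2\g^2-C}{2\l}\|\dot{x}(t)\|^2,
\]
which is exactly the Lyapunov expression whose derivative was bounded in \eqref{ederiv}. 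Since $A,B<0$ by condition $(\rho)$, the upper bound $A\|\dot{x}(t)\|^2+B\|\ddot{x}(t)\|^2$ is nonpositive almost everywhere, giving the stated monotonicity. Existence and finiteness of the limit then follow from Lemma \ref{fejer-cont1}, noting that the expression is bounded below since $f+g$ is bounded below and $-C>0$.

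For $(H_2)$, I would compute the subdifferential of $H$ using that it is the sum of $(f+g)(u)$ and a continuously differentiable function of $(u,v,w)$; the sum rule and the identity $\p(f+g)=\p f+\n g$ stated in the preliminaries give
\[
\p H(u,v,w)=\bigl(\p(f+g)(u)+\tfrac{1}{\l}(u-v),\,-\tfrac{1}{\l}(u-v),\,-\tfrac{C}{\l}w\bigr).
\]
The inclusion \eqref{inc} supplies a concrete element of $\p f$ at $\ddot{x}(t)+\g\dot{x}(t)+x(t)$; adding $\n g$ at that same point and then the term $\tfrac{1}{\l}(u-v)=\tfrac{1}{\l}(\ddot{x}(t)+(1-a)\g\dot{x}(t))$ produces, after the cancellation of $-\tfrac{\g}{\l}\dot{x}(t)$, exactly the first component of $w(t)$; the other two components are read off the gradient of the smooth part. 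The norm bound is then obtained by applying the triangle inequality on the product space, controlling $\|\n g(u)-\n g(x(t))\|\le\b\|\ddot{x}(t)\|+\b\g\|\dot{x}(t)\|$ via the Lipschitz property of $\n g$, and absorbing the coefficient coming from the $v$-component by means of the elementary estimate $a+|1-a|\le 2a+1$ (valid since $a\ge 0$), which yields the stated coefficient $(2a+1)\g$.

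For $(H_3)$, Lemma \ref{l8}(a)–(b) gives $\dot{x}(t_k),\ddot{x}(t_k)\to 0$, so the three arguments of $H$ converge to $(\ol{x},\ol{x},0)$ and both quadratic terms $\tfrac{1}{2\l}\|u_k-v_k\|^2$ and $-\tfrac{C}{2\l}\|\dot{x}(t_k)\|^2$ vanish in the limit. What remains is the convergence $(f+g)(\ddot{x}(t_k)+\g\dot{x}(t_k)+x(t_k))\to(f+g)(\ol{x})$, which has already been established inside the proof of Lemma \ref{l9} by combining lower semicontinuity of $f$ with the variational characterization of $u_k=\prox_{\l f}(x(t_k)-\l\n g(x(t_k)))$ (used against the comparison point $\ol{x}$ to obtain the matching $\limsup$ inequality) and continuity of $g$; this argument can be reused verbatim. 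The only step with nontrivial bookkeeping is the norm estimate in $(H_2)$, where the main subtlety is choosing the right grouping of terms so that the stated expression for $w(t)$ emerges; once this grouping is fixed, the estimate is a one-line application of the triangle and Lipschitz inequalities.
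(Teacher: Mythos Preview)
Your proposal is correct and follows essentially the same approach as the paper: you identify $H$ along the trajectory with the Lyapunov expression from \eqref{ederiv} for $(H_1)$, compute $\partial H$ via the sum rule and the inclusion \eqref{inc} for $(H_2)$, and reuse Lemma~\ref{l8} together with the $(f+g)(u_k)\to(f+g)(\ol x)$ argument from the proof of Lemma~\ref{l9} for $(H_3)$. The only cosmetic difference is in the norm estimate for $(H_2)$: the paper groups the second component as $\ddot{x}+(1-a)\g\dot{x}=(\ddot{x}+\g\dot{x})-a\g\dot{x}$ before applying the triangle inequality, whereas you split it directly and then invoke $a+|1-a|\le 2a+1$; both routes lead to the same bound.
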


\begin{proof} $(H_1)$. From \eqref{ederiv} we have that for almost every $t \in [0,+\infty)$
$$\frac{d}{dt}\left[(f+g)( \ddot{x}(t)+\g\dot{x}(t)+x(t))+\frac{1}{2\l}\|\ddot{x}(t)+c\g\dot{x}(t)\|^2-\frac{C}{2\l}\|\dot{x}(t)\|^2\right]\le
A\|\dot{x}(t)\|^2+B\|\ddot{x}(t)\|^2.$$
Taking into account that $A<0,\,B<0$, we obtain that for almost every $t \in [0,+\infty)$
\begin{align*}
\frac{d}{dt}\left(H(\ddot{x}(t)+\g\dot{x}(t)+x(t),\g(1-c)\dot{x}(t)+x(t),\dot{x}(t))\right) & =\\
\frac{d}{dt}\left[(f+g)( \ddot{x}(t)+\g\dot{x}(t)+x(t))+\frac{1}{2\l}\|\ddot{x}(t)+c\g\dot{x}(t)\|^2-\frac{C}{2\l}\|\dot{x}(t)\|^2\right] & \le0.
\end{align*}

 By Lemma \ref{fejer-cont1} it follows that  the limit $$\lim_{t\To+\infty}H(\ddot{x}(t)+\g\dot{x}(t)+x(t),\g(1-c)\dot{x}(t)+x(t),\dot{x}(t))\in \R$$
exists.

$(H_2)$. From \eqref{inc} we have that for every $t\in[0,+\infty)$ it holds
$$-\frac1\l\ddot{x}(t)-\frac\g\l\dot{x}(t)-\n g(x(t))\in\p f(\ddot{x}(t)+\g\dot{x}(t)+x(t)),$$
hence
$$-\frac1\l\ddot{x}(t)-\frac\g\l\dot{x}(t)-\n g(x(t))+\n g(\ddot{x}(t)+\g\dot{x}(t)+x(t))\in\p (f+g)(\ddot{x}(t)+\g\dot{x}(t)+x(t)).$$
Since for every $(u,v,w) \in \R^n \times \R^n \times \R^n$
$$\p H(u,v,w)=\left(\p (f+g)(u)+\frac1\l(u-v)\right)\times\left\{-\frac1\l(u-v)\right\}\times\left\{-\frac{C}{\l}w\right\},$$
we get
$$\p H(\ddot{x}(t)+\g\dot{x}(t)+x(t),\g a\dot{x}(t)+x(t),\dot{x}(t))=$$
$$\left(\p (f+g)(\ddot{x}(t)+\g\dot{x}(t)+x(t))+\frac1\l(\ddot{x}(t)+(1-a)\g\dot{x}(t))\right)\times\left\{-\frac1\l(\ddot{x}(t)+(1-a)\g\dot{x}(t))\right\}\times\left\{-\frac{C}{\l}\dot{x}(t)\right\},$$
consequently,
\begin{align*}
w(t)=\left(-\n g(x(t))+\n g(\ddot{x}(t)+\g\dot{x}(t)+x(t))-\frac1\l a\g\dot{x}(t)),-\frac1\l(\ddot{x}(t)+(1-a)\g\dot{x}(t)),-\frac{C}{\l}\dot{x}(t)\right) & \in \\
\p H(\ddot{x}(t)+\g\dot{x}(t)+x(t),\g a\dot{x}(t)+x(t),\dot{x}(t)) &
\end{align*}
for every $t\in[0,+\infty)$.

From the $\b-$Lipschitz continuity of $\n g$ we get for every $t \in [0,+\infty)$
\begin{align*}
\|w(t)\|\le\left(\b+\frac1\l\right)\|\ddot{x}(t)+\g\dot{x}(t)\|+2\frac{a\g}{\l}\|\dot{x}(t)\|-\frac{C}{\l}\|\dot{x}(t)\| & \le\\
\left(\b+\frac1\l\right)\|\ddot{x}(t)\|+\frac{\b\l\g+(2a+1)\g-C}{\l}\|\dot{x}(t)\|.&
\end{align*}

$(H_3)$. Let $a\ge 0$, $\ol{x}\in\omega(x)$ and $t_k\To+\infty$ such that $x(t_k)\To\ol{x}$ as $k\To+\infty$. According to the proof of Lemma \ref{l9} it holds
$(f+g)(\ddot{x}(t_k)+\g\dot{x}(t_k)+x(t_k))\To (f+g)(\ol{x})$ as $k\To+\infty.$ Further, from Lemma \ref{l8} we have $\ddot{x}(t_k)\To 0$ and $\dot{x}(t_k)\To 0$ as $k\To+\infty.$
Hence,
$$H(\ddot{x}(t_k)+\g\dot{x}(t_k)+x(t_k),a\g\dot{x}(t_k)+x(t_k),\dot{x}(t_k))=(f+g)(\ddot{x}(t_k)+\g\dot{x}(t_k)+x(t_k))+$$
$$\frac{1}{2\l}\|\ddot{x}(t_k)+(1-a)\g\dot{x}(t_k)\|^2-\frac{C}{2\l}\|\dot{x}(t_k)\|^2\To(f+g)(\ol{x})= H(\ol{x},\ol{x},0) \ \mbox{as} \ k\To+\infty.$$
\end{proof}

\begin{lemma}\label{l11} Assume that $f+g$ is bounded from below and $\g,\l$ satisfy the set of conditions $(\rho)$, and let the constants $L, A, B$ and $C$ be defined as in Lemma \ref{l8}. For $u_0,v_0\in\R^n$, let $x\in C^2([0,+\infty),\R^n)$  be the unique global solution of \eqref{dysy}. Consider the function
$$H:\R^n\times \R^n\times \R^n\To\R\cup\{+\infty\},\,H(u,v,w)=(f+g)(u)+\frac{1}{2\l}\|u-v\|^2-\frac{C}{2\l}\|w\|^2.$$
Suppose that $x$ is bounded and let $a\ge 0.$ Then the following statements are true
\begin{itemize}
\item[(a)] $\omega(\ddot{x}+\g\dot{x}+x,a\g\dot{x}+x,\dot{x})\subseteq\crit(H)=\{(u,u,0)\in\R^n\times\R^n\times\R^n:u\in\crit(f+g)\};$
\item[(b)] $\ds\lim_{t\To+\infty}\dist((\ddot{x}(t)+\g\dot{x}(t)+x(t),a\g\dot{x}(t)+x(t),\dot{x}(t)),\omega(\ddot{x}+\g\dot{x}+x,a\g\dot{x}+x,\dot{x}))=0;$
\item[(c)] $H$ is finite and constant on $\omega(\ddot{x}+\g\dot{x}+x,a\g\dot{x}+x,\dot{x});$
\item[(d)] $\omega(\ddot{x}+\g\dot{x}+x,a\g\dot{x}+x,\dot{x})$ is nonempty, compact and connected.
\end{itemize}
\end{lemma}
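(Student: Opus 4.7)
The plan is to verify the four items using standard $\omega$-limit set techniques, with Lemmas \ref{l8}, \ref{l9}, \ref{l10} supplying the nontrivial ingredients. Throughout I write $\phi(t):=(\ddot{x}(t)+\g\dot{x}(t)+x(t),\,a\g\dot{x}(t)+x(t),\,\dot{x}(t))$.

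For (a), I first compute $\partial H$. Since $H(u,v,w)=(f+g)(u)+\frac{1}{2\l}\|u-v\|^2-\frac{C}{2\l}\|w\|^2$ is the sum of a proper lower semicontinuous function of $u$ and a $C^1$ perturbation, the separable sum rule gives
\[
\partial H(u,v,w)=\Bigl(\partial(f+g)(u)+\tfrac{1}{\l}(u-v)\Bigr)\times\Bigl\{-\tfrac{1}{\l}(u-v)\Bigr\}\times\Bigl\{-\tfrac{C}{\l}w\Bigr\}.
\]
Forcing $0$ into this and using $C<0$ yields $u=v$, $w=0$, $0\in\partial(f+g)(u)$, which is the claimed form of $\crit(H)$. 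Next, for any $(\bar u,\bar v,\bar w)\in\omega(\phi)$ pick $t_k\to+\infty$ with $\phi(t_k)\to(\bar u,\bar v,\bar w)$; Lemma \ref{l8} gives $\dot{x}(t_k),\ddot{x}(t_k)\to 0$, so $\bar v=\bar u$, $\bar w=0$ and $\bar u\in\omega(x)$, and Lemma \ref{l9} supplies $\bar u\in\crit(f+g)$.

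For (b), the curve $\phi$ is continuous (since $x\in C^2$) and bounded (boundedness of $x$ together with $\dot{x},\ddot{x}\to 0$). The convergence $\dist(\phi(t),\omega(\phi))\to 0$ is then standard: otherwise one extracts $t_k\to+\infty$ along which $\phi(t_k)$ is bounded away from $\omega(\phi)$, and any cluster point is both in $\omega(\phi)$ and bounded away from it, a contradiction.

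For (c), I apply Lemma \ref{l10}(H$_1$) with the specific $c=L^2/(L^2+1)$, which produces a finite $\ell:=\lim_{t\to+\infty}H\bigl(\ddot{x}(t)+\g\dot{x}(t)+x(t),\g(1-c)\dot{x}(t)+x(t),\dot{x}(t)\bigr)$. For any $\bar x\in\omega(x)$, choose $t_k\to+\infty$ with $x(t_k)\to\bar x$; Lemma \ref{l10}(H$_3$) (invoked with $a=1-c$) then forces $\ell=H(\bar x,\bar x,0)=(f+g)(\bar x)$, so $(f+g)$ is constantly equal to $\ell$ on $\omega(x)$. Since by (a) every element of $\omega(\phi)$ for the arbitrary $a\ge 0$ of the current statement has the form $(\bar x,\bar x,0)$ with $\bar x\in\omega(x)$, we conclude $H\equiv\ell$ on $\omega(\phi)$. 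The value of $a$ from the statement is harmless because $\dot{x}(t_k)\to 0$ collapses $a\g\dot{x}(t_k)+x(t_k)\to\bar x$ irrespective of $a$.

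For (d), nonemptiness follows from boundedness of $\phi$: any sequence $\phi(t_k)$ with $t_k\to+\infty$ has a convergent subsequence. Compactness is the standard nested-intersection formula $\omega(\phi)=\bigcap_{T\ge 0}\overline{\{\phi(t):t\ge T\}}$ of compact sets. Connectedness is the classical property that the $\omega$-limit set of a bounded continuous curve on $[0,+\infty)$ is connected, since any attempt to split it into two relatively open pieces is ruled out by the continuity of $\phi$ together with $\dist(\phi(\cdot),\omega(\phi))\to 0$ from (b). The main obstacle, in my view, is the conceptual matching in (c) between the specific $a=1-c$ used in the Lyapunov functional of (H$_1$) and the arbitrary $a\ge 0$ in the statement; once one observes that the $\dot{x}$-component vanishes at every limit point, everything else reduces to the direct computation of $\partial H$ and standard topological facts about $\omega$-limit sets of bounded continuous trajectories.
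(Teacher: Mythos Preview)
Your proposal is correct and follows essentially the same route as the paper. The only minor differences are cosmetic: in (c) the paper appeals directly to Lemma~\ref{l8}(c) for the existence of $\ell=\lim_{t\to+\infty}(f+g)(\ddot{x}(t)+\g\dot{x}(t)+x(t))$ rather than going through Lemma~\ref{l10}(H$_1$), and in (d) the paper simply cites the classical results of Haraux and of Alvarez--Attouch--Bolte--Redont instead of sketching the compactness/connectedness argument, while your self-contained verification of the structure of $\crit(H)$ in (a) is something the paper leaves implicit.
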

\begin{proof} $(a)$ By definition, $$\omega(\ddot{x}+\g\dot{x}+x,a\g\dot{x}+x,\dot{x})=$$
$$\{(\ol{x},\ol{y},\ol z)\in(\R^n)^3:\exists t_k\to+\infty\mbox{ s. t. }(\ddot{x}(t_k)+\g\dot{x}(t_k)+x(t_k),a\g\dot{x}(t_k)+x(t_k),\dot{x}(t_k))\to(\ol{x},\ol{y},\ol z),\,k\to+\infty\}.$$
According to Lemma \ref{l8}, $\ddot{x}(t_k)\To 0,\,\dot{x}(t_k)\To0$ as $t_k\To+\infty$, hence
\begin{align*}
\omega(\ddot{x}+\g\dot{x}+x,a\g\dot{x}+x,\dot{x})=\omega(x,x,0) &= \\
\{(\ol{x},\ol{x},0)\in\R^n\times\R^n\times\R^n:\exists t_k\to+\infty\mbox{ such that }x(t_k)\To\ol{x},\,k\to+\infty\} &= \\
\{(\ol{x},\ol{x},0)\in\R^n\times\R^n\times\R^n:\ol{x}\in\omega(x)\}.
\end{align*}
According to Lemma \ref{l9},
$$\{(\ol{x},\ol{x},0)\in\R^n\times\R^n\times\R^n:\ol{x}\in\omega(x)\}\subseteq\{(\ol{x},\ol{x},0)\in\R^n\times\R^n\times\R^n:\ol{x}\in\crit(f+g)\}=\crit(H).$$

$(b)$ Obviously
$$0\le\lim_{t\To+\infty}\dist((\ddot{x}(t)+\g\dot{x}(t)+x(t),a\g\dot{x}(t)+x(t),\dot{x}(t)),\omega(\ddot{x}+\g\dot{x}+x,a\g\dot{x}+x,\dot{x}))\le$$
$$\lim_{t_k\To+\infty}\dist((\ddot{x}(t_k)+\g\dot{x}(t_k)+x(t_k),a\g\dot{x}(t_k)+x(t_k),\dot{x}(t_k)),\omega(\ddot{x}+\g\dot{x}+x,a\g\dot{x}+x,\dot{x}))=0.$$

$(c)$ According to Lemma \ref{l8}, $$\lim_{t\To+\infty}(f+g)( \ddot{x}(t)+\g\dot{x}(t)+x(t))=l\in\R.$$ Let $(\ol{x},\ol{x},0)\in \omega(\ddot{x}+\g\dot{x}+x,a\g\dot{x}+x,\dot{x}).$ Then there exists $t_k\To+\infty$ such that $(\ddot{x}(t_k)+\g\dot{x}(t_k)+x(t_k),a\g\dot{x}(t_k)+x(t_k),\dot{x}(t_k))\To (\ol{x},\ol{x},0)$ as $k\To+\infty.$ From Lemma \ref{l10}(H$_3$) one has
\begin{align*}
H(\ol{x},\ol{x},0)=\lim_{t_k\To+\infty}H(\ddot{x}(t_k)+\g\dot{x}(t_k)+x(t_k),a\g\dot{x}(t_k)+x(t_k),\dot{x}(t_k)) & =\\
\lim_{t_k\To+\infty}[(f+g)(\ddot{x}(t_k)+\g\dot{x}(t_k)+x(t_k))+\frac{1}{2\l}\|\ddot{x}(t_k)+(1-a)\g \dot{x}(t_k)\|^2-\frac{C}{2\l}\|\dot{x}(t_k)\|^2] & =l.
\end{align*}
Hence, $H$ takes on $\omega(\ddot{x}+\g\dot{x}+x,a\g\dot{x}+x,\dot{x})$ the constant value $l$.

Finally, $(d)$ is a classical result from \cite{haraux}. We also refer the reader  to the proof of Theorem 4.1 in 
\cite{alv-att-bolte-red}, where it is shown that the properties of $\omega(x)$ of being nonempty, compact and 
connected are generic for bounded trajectories fulfilling  $\lim_{t\rightarrow+\infty}{\dot x(t)}=0$ (see also \cite{b-sab-teb}
for a discrete version of this result).
\end{proof}

The convergence of the trajectory generated by the dynamical system \eqref{dysy} will be shown in the framework of functions satisfying 
the {\it Kurdyka-\L{}ojasiewicz property}.  For $\eta\in(0,+\infty]$, we denote by $\Theta_{\eta}$ 
the class of concave and continuous functions $\varphi:[0,\eta)\rightarrow [0,+\infty)$ such that $\varphi(0)=0$, $\varphi$ is 
continuously differentiable on $(0,\eta)$, continuous at $0$ and $\varphi'(s)>0$ for all
$s\in(0, \eta)$. In the following definition (see \cite{att-b-red-soub2010, b-sab-teb}) we use the {\it distance function} 
to a set, defined for $A\subseteq\R^n$ as $\dist(x,A)=\inf_{y\in A}\|x-y\|$ for all $x\in\R^n$.

\begin{definition}\label{KL-property} \rm({\it Kurdyka-\L{}ojasiewicz property}) Let $f:\R^n\rightarrow \R \cup \{+\infty\}$ be a proper 
and lower semicontinuous function. We say that $f$ satisfies the {\it Kurdyka-\L{}ojasiewicz (KL) property} at 
$\ol x\in \dom\partial f=\{x\in\R^n:\partial f(x)\neq\emptyset\}$
if there exist $\eta \in(0,+\infty]$, a neighborhood $U$ of $\ol x$ and a function $\varphi\in \Theta_{\eta}$ such that for all $x$ in the
intersection
$$U\cap \{x\in\R^n: f(\ol x)<f(x)<f(\ol x)+\eta\}$$ the following inequality holds
$$\varphi'(f(x)-f(\ol x))\dist(0,\partial f(x))\geq 1.$$
If $f$ satisfies the KL property at each point in $\dom\partial f$, then $f$ is called a {\it KL function}.
\end{definition}

The origins of this notion go back to the pioneering work of \L{}ojasiewicz \cite{lojasiewicz1963}, where it is proved that for a 
real-analytic function $f:\R^n\rightarrow\R$ and a critical point $\ol x\in\R^n$ (that is $\nabla f(\ol x)=0$), there exists $\theta\in[1/2,1)$ such that the function
$|f-f(\ol x)|^{\theta}\|\nabla f\|^{-1}$ is bounded around $\ol x$. This corresponds to the situation when 
$\varphi(s)=C(1-\theta)^{-1}s^{1-\theta}$. The result of \L{}ojasiewicz allows the interpretation of the KL property as a re-parametrization of the function values in order to avoid flatness around the
critical points. Kurdyka \cite{kurdyka1998} extended this property to differentiable functions definable in an o-minimal structure.
Further extensions to the nonsmooth setting can be found in \cite{b-d-l2006, att-b-red-soub2010, b-d-l-s2007, b-d-l-m2010}.

One of the remarkable properties of the KL functions is their ubiquity in applications, according to \cite{b-sab-teb}. To the class of KL functions belong semi-algebraic, real sub-analytic, semiconvex, uniformly convex and
convex functions satisfying a growth condition. We refer the reader to
\cite{b-d-l2006, att-b-red-soub2010, b-d-l-m2010, b-sab-teb, b-d-l-s2007, att-b-sv2013, attouch-bolte2009} and the references therein  for more details regarding all the classes mentioned above and illustrating examples.

An important role in our convergence analysis will be played by the following uniformized KL property given in \cite[Lemma 6]{b-sab-teb}.

\begin{lemma}\label{unif-KL-property} Let $\Omega\subseteq \R^n$ be a compact set and let $f:\R^n\rightarrow \R \cup \{+\infty\}$ be a proper
and lower semicontinuous function. Assume that $f$ is constant on $\Omega$ and $f$ satisfies the KL property at each point of $\Omega$.
Then there exist $\varepsilon,\eta >0$ and $\varphi\in \Theta_{\eta}$ such that for all $\ol x\in\Omega$ and for all $x$ in the intersection
\begin{equation}\label{int} \{x\in\R^n: \dist(x,\Omega)<\varepsilon\}\cap \{x\in\R^n: f(\ol x)<f(x)<f(\ol x)+\eta\}\end{equation}
the following inequality holds \begin{equation}\label{KL-ineq}\varphi'(f(x)-f(\ol x))\dist(0,\partial f(x))\geq 1.\end{equation}
\end{lemma}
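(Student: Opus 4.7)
The plan is to derive the uniform KL inequality from the pointwise one by a standard compactness argument, with the key observation being that because $f$ takes a constant value $\ol f$ on $\Omega$, the ``level window'' $(f(\ol x), f(\ol x)+\eta)$ does not depend on $\ol x \in \Omega$.

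First, I would apply the KL property at every point of $\Omega$. For each $\ol x \in \Omega$, there exist $\eta_{\ol x} > 0$, an open ball $B(\ol x, r_{\ol x})$ and $\varphi_{\ol x} \in \Theta_{\eta_{\ol x}}$ such that
$$\varphi_{\ol x}'(f(x)-f(\ol x))\,\dist(0,\partial f(x)) \geq 1$$
for every $x \in B(\ol x, r_{\ol x})$ with $f(\ol x) < f(x) < f(\ol x) + \eta_{\ol x}$. Using compactness of $\Omega$, I would extract a finite subcover $\{B(\ol x_i, r_i)\}_{i=1}^p$ of $\Omega$, with associated constants $\eta_i := \eta_{\ol x_i}$ and functions $\varphi_i := \varphi_{\ol x_i}$. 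Because $\Omega$ is compact and its finite open cover contains it, there exists $\varepsilon > 0$ such that $\{x \in \R^n : \dist(x,\Omega) < \varepsilon\} \subseteq \bigcup_{i=1}^p B(\ol x_i, r_i)$; this is a routine consequence of the Lebesgue number lemma applied to the cover.

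Next, I would set $\eta := \min_{i=1,\ldots,p} \eta_i > 0$ and define $\varphi := \sum_{i=1}^p \varphi_i$ on $[0,\eta)$. Since each $\varphi_i$ is concave, continuous on $[0,\eta_i) \supseteq [0,\eta)$, $C^1$ on $(0,\eta_i)$ with $\varphi_i(0)=0$ and $\varphi_i'(s) > 0$, the function $\varphi$ inherits all these properties, so $\varphi \in \Theta_\eta$. The crucial verification is the inequality: pick any $\ol x \in \Omega$ and any $x$ in the set \eqref{int}. Then $\dist(x,\Omega) < \varepsilon$ implies $x \in B(\ol x_{i}, r_{i})$ for some $i$. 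Since $f$ is constant on $\Omega$, one has $f(\ol x) = f(\ol x_i) = \ol f$, so the condition $\ol f < f(x) < \ol f + \eta \leq \ol f + \eta_i$ places $x$ in the region where the pointwise KL inequality for $\ol x_i$ applies. Using $\varphi'(s) = \sum_j \varphi_j'(s) \geq \varphi_i'(s)$ for $s \in (0,\eta)$ (all terms being positive), I would conclude
$$\varphi'(f(x)-f(\ol x))\,\dist(0,\partial f(x)) \geq \varphi_i'(f(x)-f(\ol x_i))\,\dist(0,\partial f(x)) \geq 1.$$

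The argument is essentially bookkeeping, so there is no real analytical obstacle; the delicate point is simply to notice that the constancy of $f$ on $\Omega$ is precisely what lets the windows $(\ol f, \ol f+\eta_i)$ be glued into a single window $(\ol f, \ol f+\eta)$ valid for every reference point $\ol x \in \Omega$, and that summing the $\varphi_i$ produces a single desingularizing function whose derivative dominates each $\varphi_i'$ on the relevant interval.
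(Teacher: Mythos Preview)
The paper does not supply its own proof of this lemma; it is quoted verbatim from \cite[Lemma 6]{b-sab-teb} and used as a black box. Your argument is correct and is essentially the standard compactness proof from that reference: cover $\Omega$ by finitely many KL neighborhoods, obtain the uniform $\varepsilon$ via a Lebesgue-number type argument, set $\eta=\min_i\eta_i$, and take $\varphi=\sum_i\varphi_i$ so that $\varphi'\geq\varphi_i'$ on $(0,\eta)$; the constancy of $f$ on $\Omega$ is exactly what makes the level windows coincide.
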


We state the first main result of the paper.

\begin{theorem}\label{convergence}  Assume that $f+g$ is bounded from below and $\g,\l$ satisfy the set of conditions $(\rho)$, and let the constants $L, A, B$ and $C$ be defined as in Lemma \ref{l8}. For $u_0,v_0\in\R^n$, let $x\in C^2([0,+\infty),\R^n)$  be the unique global solution of \eqref{dysy}. Consider the function
$$H:\R^n\times \R^n\times \R^n\To\R\cup\{+\infty\},\,H(u,v,w)=(f+g)(u)+\frac{1}{2\l}\|u-v\|^2-\frac{C}{2\l}\|w\|^2.$$
Suppose that $x$ is bounded and $H$ is a KL function. Then the following statements are true
\begin{itemize}
\item[(a)] $\dot{x}\in L^1([0,+\infty),\R^n);$
\item[(b)] $\ddot{x}\in L^1([0,+\infty),\R^n);$
\item[(c)] there exists $\ol{x}\in\crit(f+g)$ such that $\lim_{t\To+\infty}x(t)=\ol{x}.$
\end{itemize}
\end{theorem}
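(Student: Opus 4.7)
The plan is to follow the standard Kurdyka--\L{}ojasiewicz template adapted to the regularized objective $H$, evaluated at the trajectory with middle coordinate $\g(1-c)\dot x(t)+x(t)$, where $c=L^2/(L^2+1)$ as in Lemma \ref{l10}. Set $\zeta(t) := (\ddot x(t)+\g\dot x(t)+x(t),\g(1-c)\dot x(t)+x(t),\dot x(t))$. By Lemma \ref{l10}(H$_1$), $t \mapsto H(\zeta(t))$ is nonincreasing and converges to a finite limit $\ol H$. Lemma \ref{l11} shows that the limit set $\Omega := \omega(\ddot x+\g\dot x+x,\g(1-c)\dot x+x,\dot x)$ is nonempty and compact, with $H \equiv \ol H$ on $\Omega$, and that $\dist(\zeta(t),\Omega) \to 0$ as $t\to+\infty$. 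Applying the uniformized KL lemma (Lemma \ref{unif-KL-property}) to $H$ on $\Omega$ then provides $\varepsilon,\eta > 0$ and $\varphi \in \Theta_\eta$ such that the KL inequality $\varphi'(H(u)-\ol H)\dist(0,\partial H(u)) \ge 1$ holds for every $u$ in the intersection \eqref{int}. I would fix $t_0 \ge 0$ beyond which $\zeta(t)$ lies in that intersection.

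I would then split into two cases. If $H(\zeta(t_1)) = \ol H$ for some $t_1 \ge t_0$, monotonicity forces equality beyond $t_1$, and the descent inequality \eqref{ederiv} combined with $A,B < 0$ forces $\dot x \equiv 0$ and $\ddot x \equiv 0$ on $[t_1,+\infty)$, making (a)--(c) immediate. Otherwise $H(\zeta(t)) > \ol H$ for every $t \ge t_0$, and I would chain three ingredients at each such $t$: from \eqref{ederiv},
\[
-\tfrac{d}{dt} H(\zeta(t)) \ge -A\|\dot x(t)\|^2 - B\|\ddot x(t)\|^2 \ge \tfrac{1}{2}\min(-A,-B)\bigl(\|\dot x(t)\|+\|\ddot x(t)\|\bigr)^2;
\]
Lemma \ref{l10}(H$_2$) with $a = 1-c$ supplies an element $w(t) \in \partial H(\zeta(t))$ satisfying $\|w(t)\| \le M\bigl(\|\dot x(t)\|+\|\ddot x(t)\|\bigr)$ for an explicit constant $M$ depending only on $\b,\l,\g,c,C$; and the uniformized KL inequality gives $\varphi'(H(\zeta(t))-\ol H) \ge 1/\|w(t)\|$. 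Multiplying the descent by $\varphi'(H(\zeta(t))-\ol H)$ and invoking the chain rule for $\varphi\circ(H(\zeta(\cdot))-\ol H)$, I would arrive at the key differential inequality
\[
-\frac{d}{dt}\varphi(H(\zeta(t))-\ol H) \ge \frac{\min(-A,-B)}{2M}\bigl(\|\dot x(t)\|+\|\ddot x(t)\|\bigr)
\]
for almost every $t \ge t_0$.

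Integrating from $t_0$ to $T$ and using $\varphi \ge 0$ yields the uniform bound
\[
\int_{t_0}^T \bigl(\|\dot x(t)\|+\|\ddot x(t)\|\bigr)\,dt \le \frac{2M}{\min(-A,-B)}\,\varphi(H(\zeta(t_0))-\ol H),
\]
valid for every $T>t_0$, which establishes (a) and (b). The $L^1$-integrability of $\dot x$ forces $x$ to satisfy the Cauchy criterion at infinity, so $\ol x := \lim_{t\to+\infty} x(t)$ exists; since $\ol x \in \omega(x)$, Lemma \ref{l9} delivers $\ol x \in \crit(f+g)$, proving (c).

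The principal obstacle I anticipate is the rigorous use of the chain rule for $\varphi\circ(H(\zeta(\cdot))-\ol H)$, because $\varphi'$ may blow up as $H(\zeta(t))-\ol H \to 0^+$ and one must ensure that $t \mapsto H(\zeta(t))$ is sufficiently regular. I would resolve this by restricting to the open set $\{t \ge t_0: H(\zeta(t))-\ol H > \d\}$ where $\varphi'$ is bounded and $H(\zeta(\cdot))$ is absolutely continuous by the arguments underlying \eqref{ederiv}, deriving the integral bound there, and then letting $\d \downarrow 0$ via monotone convergence together with the continuity of $\varphi$ at $0$ with $\varphi(0)=0$. A secondary technical point is checking that $M$ in the subgradient bound from Lemma \ref{l10}(H$_2$) is genuinely uniform in $t$, which follows at once from the explicit algebraic form of the coefficients there.
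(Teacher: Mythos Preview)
Your proposal is correct and follows essentially the same route as the paper: the same Lyapunov $H$ evaluated along $\zeta(t)=(\ddot x+\g\dot x+x,\g(1-c)\dot x+x,\dot x)$, the same case split, the same combination of the descent inequality \eqref{ederiv}, the subgradient bound of Lemma \ref{l10}(H$_2$) with $a=1-c$, and the uniformized KL lemma, followed by integration of $-\tfrac{d}{dt}\varphi(H(\zeta(t))-\ol H)$. The only difference is cosmetic: you pass directly from the quadratic descent to a linear lower bound on $\|\dot x\|+\|\ddot x\|$ via $(-A)\|\dot x\|^2+(-B)\|\ddot x\|^2\ge\tfrac12\min(-A,-B)(\|\dot x\|+\|\ddot x\|)^2$ and $\|w(t)\|\le M(\|\dot x\|+\|\ddot x\|)$, whereas the paper first integrates the ratios $\|\dot x\|^2/(s\|\ddot x\|+p\|\dot x\|)$ and $\|\ddot x\|^2/(s\|\ddot x\|+p\|\dot x\|)$ and then recombines them with an AM--GM step.
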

\begin{proof} 
Let be $c:=\frac{L^2}{L^2+1}$. Consider an arbitrary $(\ol{x},\ol{x},0)\in \omega(\ddot{x}+\g\dot{x}+x,(1-c)\g\dot{x}+x,\dot{x}).$ Then one has
$$\lim_{t\To+\infty}\left(H(\ddot{x}(t)+\g\dot{x}(t)+x(t),\g(1-c)\dot{x}(t)+x(t),\dot{x}(t))\right)= H(\ol{x},\ol{x},0).$$

{\bf Case I.}
There exists $\ol{t}\ge0$ such that
$$H(\ddot{x}(\ol{t})+\g\dot{x}(\ol{t})+x(\ol{t}),\g(1-c)\dot{x}(\ol{t})+x(\ol{t}),\dot{x}(\ol t))= H(\ol{x},\ol{x},0).$$
We have for almost every $t\in[0,+\infty)$ that
$$\frac{d}{dt}\left[H(\ddot{x}(t)+\g\dot{x}(t)+x(t),\g(1-c)\dot{x}(t)+x(t),\dot{x}(t))\right]\le
A\|\dot{x}(t)\|^2+B\|\ddot{x}(t)\|^2\le 0.$$
Hence, for every $t\ge \ol{t}$ it holds
$$H(\ddot{x}(t)+\g\dot{x}(t)+x(t),\g(1-c)\dot{x}(t)+x(t),\dot{x}(t))\le H(\ol{x},\ol{x},0).$$
On the other hand
$$H(\ddot{x}(t)+\g\dot{x}(t)+x(t),\g(1-c)\dot{x}(t)+x(t),\dot{x}(t))\ge$$
$$\lim_{t\To+\infty}\left(H(\ddot{x}(t)+\g\dot{x}(t)+x(t),\g(1-c)\dot{x}(t)+x(t),\dot{x}(t))\right)= H(\ol{x},\ol{x},0),$$
hence
$$H(\ddot{x}(t)+\g\dot{x}(t)+x(t),\g(1-c)\dot{x}(t)+x(t),\dot{x}(t))= H(\ol{x},\ol{x},0)$$
for every $t\ge \ol{t}$.

Consequently,
$$\frac{d}{dt}\left[H(\ddot{x}(t)+\g\dot{x}(t)+x(t),\g(1-c)\dot{x}(t)+x(t),\dot{x}(t)\right]=0$$
for every $t\ge \ol{t}$, which means that
$$0\le A\|\dot{x}(t)\|^2+B\|\ddot{x}(t)\|^2\le 0$$
for every $t\ge \ol{t}.$

But $A<0$ and $B<0$, hence $\dot{x}(t)=0$ and $\ddot{x}(t)=0$ on $[\ol{t},+\infty).$ This leads to $\dot{x},\ddot{x}\in L^1([0,+\infty),\R^n)$ and to the fact hat $x(t)=\ol{x}$ is constant on $[\ol{t},+\infty).$

{\bf Case II.} For every $t\ge 0$
$$H(\ddot{x}(t)+\g\dot{x}(t)+x(t),\g(1-c)\dot{x}(t)+x(t),\dot{x}(t))> H(\ol{x},\ol{x},0).$$
Let $\Omega=\omega(\ddot{x}+\g\dot{x}+x,(1-c)\g\dot{x}+x,\dot{x}).$ According to Lemma \ref{l11}, $H$ is constant and finite on $\Omega$ and $\Omega$ is nonempty, compact and connected. Since $H$ is a KL function, by Lemma \ref{unif-KL-property}, there exist $\varepsilon,\eta >0$ and a concave function $\varphi\in \Theta_{\eta}$ such that for every $(\ol x,\ol x,0)\in\Omega$ and every
\begin{align}\label{intersect}(x,y,z)\in\{(u,v,w)\in\R^n\times \R^n\times\R^n: \dist((u,v,w),\Omega)<\varepsilon\}  & \cap \nonumber \\
\{(u,v,w)\in\R^n\times\R^n\times\R^n: H(\ol x,\ol x,0)<H(u,v,w)<H(\ol x,\ol x,0)+\eta\} &
\end{align}
the following inequality holds \begin{equation}\label{kline}\varphi'(H(x,y,z)-H(\ol x,\ol x,0))\dist((0,0,0),\partial H(x,y,z))\geq 1.\end{equation}

Since $$\lim_{t\To+\infty}\left(H(\ddot{x}(t)+\g\dot{x}(t)+x(t),\g(1-c)\dot{x}(t)+x(t),\dot{x}(t))\right)= H(\ol{x},\ol{x},0)$$
and
$$H(\ddot{x}(t)+\g\dot{x}(t)+x(t),\g(1-c)\dot{x}(t)+x(t),\dot{x}(t))> H(\ol{x},\ol{x},0),$$
there exists $t_1>0$ such that
$$H(\ddot{x}(t)+\g\dot{x}(t)+x(t),\g(1-c)\dot{x}(t)+x(t),\dot{x}(t))< H(\ol{x},\ol{x},0)+\eta \,\forall t\ge t_1.$$

Since $\lim_{t\To+\infty}\dist((\ddot{x}(t)+\g\dot{x}(t)+x(t),\g(1-c)\dot{x}(t)+x(t),\dot{x}(t)),\Omega)=0$, there exists $t_2\ge 0$ such that
$$\dist((\ddot{x}(t)+\g\dot{x}(t)+x(t),\g(1-c)\dot{x}(t)+x(t),\dot{x}(t)),\Omega)<\e,\,\forall t\ge t_2.$$
Hence, for every $t\ge T=\max(t_1,t_2)$ we have
\begin{align*}
\varphi'(H(\ddot{x}(t)+\g\dot{x}(t)+x(t),\g(1-c)\dot{x}(t)+x(t),\dot{x}(t))-H(\ol{x},\ol{x},0))\cdot &\\
\dist((0,0,0),\partial H(\ddot{x}(t)+\g\dot{x}(t)+x(t),\g(1-c)\dot{x}(t)+x(t),\dot{x}(t))) & \ge 1.
\end{align*}

On the other hand, for every $t \in [T, +\infty)$,
$$\dist((0,0,0),\partial H(\ddot{x}(t)+\g\dot{x}(t)+x(t),\g(1-c)\dot{x}(t)+x(t),\dot{x}(t)))\le\|w(t)\|,$$
where $$w(t)=\left(-\n g(x(t))+\n g(\ddot{x}(t)+\g\dot{x}(t)+x(t))-\frac1\l (1-c)\g\dot{x}(t)),-\frac1\l(\ddot{x}(t)+c\g\dot{x}(t)),-\frac{C}{\l}\dot{x}(t)\right)$$ since, according to Lemma \ref{l10} $(H_2)$,
$$w(t)\in\p H(\ddot{x}(t)+\g\dot{x}(t)+x(t),\g (1-c)\dot{x}(t)+x(t),\dot{x}(t)).$$
Further,
$$\|w(t)\|\le\left(\b+\frac1\l\right)\|\ddot{x}(t)\|+\frac{\b\l\g+(3-2c)\g-C}{\l}\|\dot{x}(t)\|$$ which leads to
$$\varphi'(H(\ddot{x}(t)+\g\dot{x}(t)+x(t),\g(1-c)\dot{x}(t)+x(t),\dot{x}(t))-H(\ol{x},\ol{x},0))\left( s\|\ddot{x}(t)\|+p\|\dot{x}(t)\|\right)\ge 1 \ \forall t \in [T, +\infty),$$
where
$s:=\b+\frac1\l>0$ and $p:=\frac{\b\l\g+(3-2c)\g-C}{\l}>0.$

We have
\begin{align*}
\frac{d}{dt}\varphi(H(\ddot{x}(t)+\g\dot{x}(t)+x(t),\g(1-c)\dot{x}(t)+x(t),\dot{x}(t))-H(\ol{x},\ol{x},0)) &=\\
\varphi'(H(\ddot{x}(t)+\g\dot{x}(t)+x(t),\g(1-c)\dot{x}(t)+x(t),\dot{x}(t))-H(\ol{x},\ol{x},0))\cdot\\
\frac{d}{dt}H(\ddot{x}(t)+\g\dot{x}(t)+x(t),\g(1-c)\dot{x}(t)+x(t),\dot{x}(t))
\end{align*}
and since
$$\frac{d}{dt}H(\ddot{x}(t)+\g\dot{x}(t)+x(t),\g(1-c)\dot{x}(t)+x(t),\dot{x}(t))\le A\|\dot{x}(t)\|^2+B\|\ddot{x}(t)\|^2\le0$$
and
$$\varphi'(H(\ddot{x}(t)+\g\dot{x}(t)+x(t),\g(1-c)\dot{x}(t)+x(t),\dot{x}(t))-H(\ol{x},\ol{x},0))\ge\frac{1}{ s\|\ddot{x}(t)\|+p\|\dot{x}(t)\|}$$
we get for every $t \in [T, +\infty)$
\begin{equation}\label{e15}
\frac{d}{dt}\varphi(H(\ddot{x}(t)+\g\dot{x}(t)+x(t),\g(1-c)\dot{x}(t)+x(t),\dot{x}(t))-H(\ol{x},\ol{x},0))\le\frac{A\|\dot{x}(t)\|^2+B\|\ddot{x}(t)\|^2}
{s\|\ddot{x}(t)\|+p\|\dot{x}(t)\|}\le0.
\end{equation}
Since $\varphi$ is bounded from below, similarly as in the proof of Lemma \ref{l8}, we obtain that
$$\frac{\|\dot{x}(\cdot)\|^2}{s\|\ddot{x}(\cdot)\|+p\|\dot{x}(\cdot)\|},\frac{\|\ddot{x}(\cdot)\|^2}{s\|\ddot{x}(\cdot)\|+p\|\dot{x}(\cdot)\|}\in L^1([0,+\infty),\R).$$
By using the arithmetical-geometrical mean inequality we have $$\ds\sqrt{\frac{\|\dot{x}(\cdot)\|^2}{s\|\ddot{x}(\cdot)\|+p\|\dot{x}(\cdot)\|}\cdot\frac{\|\ddot{x}(\cdot)\|^2}{s\|\ddot{x}(\cdot)\|+p\|\dot{x}(\cdot)\|}}= \frac{\|\dot{x}(\cdot)\|\|\ddot{x}(\cdot)\|}{s\|\ddot{x}(\cdot)\|+p\|\dot{x}(\cdot)\|}\in L^1([0,+\infty),\R).$$
Hence,
$$\|\dot{x}(\cdot)\|+\|\ddot{x}(\cdot)\|= p \frac{\|\dot{x}(\cdot)\|^2}{s\|\ddot{x}(\cdot)\|+p\|\dot{x}(\cdot)\|}+s \frac{\|\ddot{x}(\cdot)\|^2}{s\|\ddot{x}(\cdot)\|+p\|\dot{x}(\cdot)\|}+(s+p)\frac{\|\dot{x}(\cdot)\|\|\ddot{x}(\cdot)\|}{s\|\ddot{x}(\cdot)\|+p\|\dot{x}(\cdot)\|}\in L^1([0,+\infty),\R).$$
This shows that $\dot{x},\ddot{x}\in L^1([0,+\infty),\R^n)$, hence, according to Lemma \ref{fejer-cont1}, there exists
$\lim_{t\To+\infty}x(t) = \ol x$.
\end{proof}

\begin{remark} Similar regularizations of the objective function as the one considered in this section
have been used in \cite{bc-forder-kl} for studying first order dynamical systems, but also in \cite{bcl, ipiano}, in the investigation of non-relaxed forward-backward methods involving inertial and memory effects in the nonconvex setting.
\end{remark}

\begin{remark} Since the class of semi-algebraic functions is closed under addition (see for example \cite{b-sab-teb}) and 
$(u,v) \mapsto \alpha\|u-v\|^2$ and $w\mapsto \alpha'\|w\|^2$ are semi-algebraic for $\alpha,\alpha'>0$, the conclusion of the 
previous theorem holds if the condition $H$ is a KL function is replaced by the assumption that $f+g$ is semi-algebraic.  
\end{remark}

\begin{remark}\label{cond-x-bound} 
Assume that $\gamma, \lambda>0$ fulfill the set of conditions $(\rho)$ and that $f+g$ is coercive, 
that is $$\lim_{\|u\|\rightarrow+\infty}(f+g)(u)=+\infty.$$ 
For $u_0,v_0\in\R^n$, let  $x \in C^2([0,+\infty), \R^n)$ be the unique global solution of 
\eqref{dysy}. Then $x$ is bounded.  

Indeed, notice that $f+g$ is bounded from below, being a proper, lower semicontinuous and coercive function (see for example \cite{rock-wets}). From \eqref{integ} it follows that $\ddot x(T)+\gamma \dot x(T)+x(T)$ is contained for every $T\geq 0$ in a lower level set 
of $f+g$, which is a bounded set due to the coercivity assumption. Combining this fact with Lemma \ref{l8} one can easily derive that $x$  is bounded. 
\end{remark}

\section{Convergence rates}\label{sec5}

In the context of optimization problems involving KL functions, it is known (see \cite{lojasiewicz1963, b-d-l2006, attouch-bolte2009}) 
that convergence rates of the trajectory can be formulated in terms of the so-called \L{}ojasiewicz exponent.

\begin{definition}\label{Lexpo}  Let $f:\R^n\To \R \cup \{+\infty\}$ be a proper and lower semicontinuous function. The function $f$ is said to fulfill the {\L}ojasiewicz property, if for every $\ol x\in\crit{f}$ there exist $K,\e>0$ and $\t\in(0,1)$ such that
$$|f(x)-f(\ol x)|^\t\le K\|x^*\|\mbox{ for every }x\mbox{ fulfilling }\|x-\ol x\|<\e\mbox{ and every }x^*\in\p f(x).$$

The number $\t$  is called the {\L}ojasiewicz exponent of $f$ at the critical  point $\ol x.$
\end{definition}

In the following theorem we obtain convergence rates for both the trajectory generated \eqref{dysy} and its velocity (see, also, \cite{b-d-l2006, attouch-bolte2009}).

\begin{theorem}\label{th-conv-rate}  Assume that $f+g$ is bounded from below and $\g,\l$ satisfy the set of conditions $(\rho)$, and let the constants $L, A, B$ and $C$ be defined as in Lemma \ref{l8}. For $u_0,v_0\in\R^n$, let $x\in C^2([0,+\infty),\R^n)$  be the unique global solution of \eqref{dysy}. Consider the function
$$H:\R^n\times \R^n\times \R^n\To\R\cup\{+\infty\},\,H(u,v,w)=(f+g)(u)+\frac{1}{2\l}\|u-v\|^2-\frac{C}{2\l}\|w\|^2.$$
Suppose that $x$ is bounded and let $\ol x\in\crit(f+g)$ be such that $\lim_{t\To+\infty}x(t)=\ol x$ and $H$ fulfills the {\L}ojasiewicz property at $(\ol x,\ol x,0)\in\crit H$ with {\L}ojasiewicz exponent  $\t$.

Then, there exist $a_1,a_2,a_3,a_4>0$ and $t_0>0$ such that for every $t \in [t_0, +\infty)$ the following statements are true
\begin{itemize}
\item[(a)] if $\t\in(0,\frac12),$ then $x$ converges in finite time;
\item[(b)] if $\t=\frac12,$ then $\|x(t)-\ol x\|\le a_1 e^{-a_2t}$ and $\|\dot{x}(t)\|\le a_1 e^{-a_2t}$ ;
\item[(c)] if $\t\in(\frac12,1),$ then $\|x(t)-\ol x\|\le (a_3t+a_4)^{-\frac{1-\t}{2\t-1}}$ and $\|\dot{x}(t)\|\le (a_3t+a_4)^{-\frac{1-\t}{2\t-1}}.$
\end{itemize}
\end{theorem}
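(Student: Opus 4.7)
The plan is to exploit the differential inequality already derived in the proof of Theorem \ref{convergence} and feed into it the Łojasiewicz inequality with the desingularizing function $\varphi(s)=s^{1-\theta}/(1-\theta)$. Set
$$h(t):=H\bigl(\ddot x(t)+\gamma\dot x(t)+x(t),\,\gamma(1-c)\dot x(t)+x(t),\,\dot x(t)\bigr)-H(\bar x,\bar x,0)\ge 0,$$
where $c=L^2/(L^2+1)$. From \eqref{ederiv} and Lemma \ref{l10}(H$_2$) we already have $h'(t)\le A\|\dot x(t)\|^2+B\|\ddot x(t)\|^2$ and a subgradient $w(t)\in\partial H(\cdot)$ of norm $\|w(t)\|\le s\|\ddot x(t)\|+p\|\dot x(t)\|$ with $s,p>0$. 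Should $h(\bar t)=0$ for some $\bar t\ge 0$, the argument of Case I of Theorem \ref{convergence} forces $\dot x\equiv\ddot x\equiv 0$ beyond $\bar t$ and case (a) follows immediately. Otherwise $h(t)>0$ for all $t$ and, since $x(t)\to\bar x$, $\dot x(t)\to 0$, $\ddot x(t)\to 0$, there exists $t_0$ such that for $t\ge t_0$ the triple $(\ddot x+\gamma\dot x+x,\gamma(1-c)\dot x+x,\dot x)$ lies in the Łojasiewicz neighborhood of $(\bar x,\bar x,0)$. Hence
$$h(t)^{\theta}\le K\,\|w(t)\|\le K\max(s,p)\bigl(\|\dot x(t)\|+\|\ddot x(t)\|\bigr)\qquad\forall\,t\ge t_0.$$

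The second step is to introduce the tail integral
$$\sigma(t):=\int_{t}^{+\infty}\bigl(\|\dot x(r)\|+\|\ddot x(r)\|\bigr)\,dr,$$
which is finite by Theorem \ref{convergence} and satisfies both $\|x(t)-\bar x\|\le\sigma(t)$ and $\|\dot x(t)\|=\bigl\|\int_t^{+\infty}\ddot x(r)\,dr\bigr\|\le\sigma(t)$ (using $\dot x(t)\to 0$). Repeating the computation already carried out in Theorem \ref{convergence} that produced \eqref{e15}, with $\varphi(s)=s^{1-\theta}/(1-\theta)$ so that $\varphi'(h)=h^{-\theta}$, yields
$$\frac{d}{dt}\varphi\bigl(h(t)\bigr)\le -C_0\bigl(\|\dot x(t)\|+\|\ddot x(t)\|\bigr)$$
for some $C_0>0$ (using $A,B<0$ and the elementary bound $\sqrt{a^2+b^2}\ge(a+b)/\sqrt{2}$). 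Integrating from $t$ to $+\infty$ and noting $h(t)\to 0$ gives $\sigma(t)\le C_0^{-1}\varphi(h(t))=C_1\,h(t)^{1-\theta}$. Combined with the Łojasiewicz estimate $h(t)^{\theta}\le K\max(s,p)(-\sigma'(t))$, raising to the $(1-\theta)/\theta$ power produces the self-contained scalar differential inequality
$$-\sigma'(t)\ge C_2\,\sigma(t)^{\theta/(1-\theta)}\qquad\forall\,t\ge t_0,$$
with $C_2>0$ depending only on $K,s,p,A,B,\theta$.

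The third step is to integrate this ODE inequality with $\alpha:=\theta/(1-\theta)$ and distinguish the three ranges of $\theta$. If $\theta\in(0,1/2)$ then $\alpha<1$, and $(\sigma^{1-\alpha})'\le -C_2(1-\alpha)$ forces $\sigma(t)$ to reach $0$ in finite time; this together with the bounds $\|x(t)-\bar x\|\le\sigma(t)$ and $\|\dot x(t)\|\le\sigma(t)$ proves (a). If $\theta=1/2$ then $\alpha=1$ and Grönwall's inequality gives $\sigma(t)\le\sigma(t_0)e^{-C_2(t-t_0)}$, whence (b) follows with appropriate $a_1,a_2$. If $\theta\in(1/2,1)$ then $\alpha>1$ and $(\sigma^{1-\alpha})'\ge C_2(\alpha-1)$, so upon integration
$$\sigma(t)\le\bigl(\sigma(t_0)^{1-\alpha}+C_2(\alpha-1)(t-t_0)\bigr)^{1/(1-\alpha)}=(a_3t+a_4)^{-(1-\theta)/(2\theta-1)},$$
proving (c) via the two comparison bounds on $\|x(t)-\bar x\|$ and $\|\dot x(t)\|$.

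The main obstacle I foresee is purely technical: carefully tracking the chain of constants through the Łojasiewicz inequality, the inequality $(\rho)$, and the comparison estimates (in particular the equivalence of $\|\dot x\|+\|\ddot x\|$ with $\sqrt{\|\dot x\|^2+\|\ddot x\|^2}$ needed to convert $A\|\dot x\|^2+B\|\ddot x\|^2$ into $-C_0(\|\dot x\|+\|\ddot x\|)(s\|\ddot x\|+p\|\dot x\|)$). Everything else is standard, given the groundwork already laid in Lemmas \ref{l8}--\ref{l11} and the proof of Theorem \ref{convergence}. The only conceptual point deserving care is the finite-time/strict-positivity dichotomy for $h$, which is why one must handle the degenerate case $h(\bar t)=0$ separately before invoking the Łojasiewicz inequality.
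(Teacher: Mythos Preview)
Your proposal is correct and follows essentially the same route as the paper's own proof: the dichotomy on $h$, the tail integral $\sigma$, the bounds $\|x(t)-\bar x\|\le\sigma(t)$ and $\|\dot x(t)\|\le\sigma(t)$, the differential inequality $\frac{d}{dt}h(t)^{1-\theta}\le -C_0(\|\dot x\|+\|\ddot x\|)$ obtained from \eqref{e15} together with the \L ojasiewicz estimate, the integration yielding $\sigma(t)\lesssim h(t)^{1-\theta}$, the second use of \L ojasiewicz to close the loop into $-\sigma'\ge C_2\,\sigma^{\theta/(1-\theta)}$, and the standard three-case integration of this scalar inequality. The only cosmetic difference is that the paper obtains the comparison $A\|\dot x\|^2+B\|\ddot x\|^2\le m(s\|\ddot x\|+p\|\dot x\|)(\|\dot x\|+\|\ddot x\|)$ by explicitly maximizing the rational function $r\mapsto (A+Br^2)/(p+(s+p)r+sr^2)$, whereas you indicate the (equally valid and arguably simpler) route via $\|\dot x\|^2+\|\ddot x\|^2\ge\tfrac12(\|\dot x\|+\|\ddot x\|)^2$ and $s\|\ddot x\|+p\|\dot x\|\le\max(s,p)(\|\dot x\|+\|\ddot x\|)$.
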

\begin{proof} Let be $s:=\b+\frac1\l>0$ and $p:=\frac{\b\l\g+(3-2c)\g-C}{\l}>0$, as defined in Lemma \ref{l10}. The function $g:[0,+\infty)\To\R,\,g(r)=\frac{A+Br^2}{p+(s+p)r+sr^2}$ attains at $r_0=\frac{(sA-pB)-\sqrt{(sA-pB)^2+(s+p)^2 AB}}{(s+p)B}>0$ its maximum. Hence, for $m:=\max\left(\frac{B}{s}, g(r_0)\right) < 0$, it holds 
$$A\|\dot{x}(t)\|^2+B\|\ddot{x}(t)\|^2 \le m(s\|\ddot{x}(t)\|+p\|\dot{x}(t)\|)(\|\dot{x}(t)\|+\|\ddot{x}(t)\|)$$ for every $t\in[0,+\infty).$

We define for every $t\in [0, +\infty)$
$$\s(t):=\int_t^{+\infty}(\|\dot{x}(s)\|+\|\ddot{x}(t)\|)ds.$$
Let $t\in [0, +\infty)$ be fixed. For $T\ge t$ we have
$$\|x(t)-\ol x\|=\left \|x(T)-\ol x-\int_t^{T}\dot{x}(s)ds \right\|\le\|x(T)-\ol x\|+\int_t^{T}\|\dot{x}(s)\|ds.$$
By taking the limit as  $T\To+\infty$ we obtain
\begin{equation}\label{e17}
\|x(t)-\ol x\|\le\int_t^{+\infty}\|\dot{x}(s)\|ds\le\s(t).
\end{equation}
Further, for $T\ge t$ we have
$$\|\dot{x}(t)\|= \left \|\dot{x}(T)-\int_t^{T}\ddot{x}(s)ds \right \|\le\|\dot{x}(T)\|+\int_t^{T}\|\ddot{x}(s)\|ds.$$
By taking the limit as $T\To+\infty$ we obtain
\begin{equation}\label{e18}
\|\dot{x}(t)\|\le\int_t^{+\infty}\|\ddot{x}(s)\|ds\le\s(t).
\end{equation}

We have seen in the proof of Theorem \ref{convergence} that, if there exists $\ol{t}\ge0$ such that
$$H(\ddot{x}(\ol{t})+\g\dot{x}(\ol{t})+x(\ol{t}),\g(1-c)\dot{x}(\ol{t})+x(\ol{t}),\dot{x}(\ol t))= H(\ol{x},\ol{x},0),$$
then $x$ is constant on $[\ol t,+\infty)$, hence the conclusion follows automatically.

On the other hand, if for every $t\ge 0$ one has
$$H(\ddot{x}(t)+\g\dot{x}(t)+x(t),\g(1-c)\dot{x}(t)+x(t),\dot{x}(t))> H(\ol{x},\ol{x},0),$$
then, according to the proof of Theorem \ref{convergence} and \eqref{e15}, there exists $t_0\ge 0$ such that for every $t\in [t_0, +\infty)$
$$K\frac{d}{dt}(H(\ddot{x}(t)+\g\dot{x}(t)+x(t),\g(1-c)\dot{x}(t)+x(t),\dot{x}(t))-H(\ol{x},\ol{x},0))^{1-\t}\le\frac{A\|\dot{x}(t)\|^2+B\|\ddot{x}(t)\|^2}
{s\|\ddot{x}(t)\|+p\|\dot{x}(t)\|},$$
and
$$\|(\ddot{x}(t)+\g\dot{x}(t)+x(t),(1-c)\g\dot{x}(t)+x(t),\dot{x}(t)) - (\ol x,\ol x,0)\|<\e.$$

Hence, for every $t\in [t_0, +\infty)$
\begin{equation}\label{e16}M(\|\dot{x}(t)\|+\|\ddot{x}(t)\|)+\frac{d}{dt}(H(\ddot{x}(t)+\g\dot{x}(t)+x(t),\g(1-c)\dot{x}(t)+x(t),\dot{x}(t))-H(\ol{x},\ol{x},0))^{1-\t}\le 0,
\end{equation}
and
$$\|(\ddot{x}(t)+\g\dot{x}(t)+x(t),(1-c)\g\dot{x}(t)+x(t),\dot{x}(t))- (\ol x,\ol x,0)\|<\e,$$
where $M:=-\frac{m}{K}>0.$
If we integrate \eqref{e16} on the interval $[t,T]$, where $T \geq  t\ge t_0$, we obtain
\begin{align*}
M\int_t^T(\|\dot{x}(s)\|+\|\ddot{x}(s)\|)ds+(H(\ddot{x}(T)+\g\dot{x}(T)+x(T),\g(1-c)\dot{x}(T)+x(T),\dot{x}(T))-H(\ol{x},\ol{x},0))^{1-\t} & \le\\
(H(\ddot{x}(t)+\g\dot{x}(t)+x(t),\g(1-c)\dot{x}(t)+x(t),\dot{x}(t))-H(\ol{x},\ol{x},0))^{1-\t},&
\end{align*}
hence
$$M\s(t)\le (H(\ddot{x}(t)+\g\dot{x}(t)+x(t),\g(1-c)\dot{x}(t)+x(t),\dot{x}(t))-H(\ol{x},\ol{x},0))^{1-\t} \ \quad \forall t\ge t_0.$$
Since $\t$ is the {\L}ojasiewicz exponent of $H$ at the   point $(\ol x,\ol x,0)\in\crit H,$ we have
$$|H(\ddot{x}(t)+\g\dot{x}(t)+x(t),\g(1-c)\dot{x}(t)+x(t),\dot{x}(t))-H(\ol{x},\ol{x},0)|^{\t}\le K\|x^*\|,$$
for every $t\in [t_0, +\infty)$ and every
$$x^*\in\p H(\ddot{x}(t)+\g\dot{x}(t)+x(t),\g(1-c)\dot{x}(t)+x(t),\dot{x}(t)).$$
According to Lemma \ref{l10}$(H_2)$, there exists some $\widetilde x^*\in\p H( \ddot{x}(t)+\g\dot{x}(t)+x(t),\g(1-c)\dot{x}(t)+x(t),\dot{x}(t))$ such that for almost every $t\in [t_0, +\infty)$
$$\|\widetilde x^*(t)\|\le s\|\ddot{x}(t)\|+p\|\dot{x}(t)\|\le N(\|\ddot{x}(t)\|+\|\dot{x}(t)\|),$$
where $N=\max(s,p).$
Hence,
$$M\s(t)\le(KN(\|\ddot{x}(t)\|+\|\dot{x}(t)\|))^{\frac{1-\t}{\t}}$$
for almost every $t\in [t_0, +\infty)$. But $\dot{\s}(t)=-\|\ddot{x}(t)\|-\|\dot{x}(t)\|$, consequently, there exists $\a>0$ such that for almost every $t\in [t_0, +\infty)$
\begin{equation}\label{e19}
\dot{\s}(t)\le-\a(\s(t))^{\frac{\t}{1-\t}}.
\end{equation}

If $\t=\frac12$, then $\dot{\s}(t)\le-\a(\s(t))$ for almost every $t\in [t_0, +\infty)$. By multiplying with $e^{\a t}$ and integrating on $[t_0,t]$, we get that there exist $a_1,a_2>0$ such that $$\s(t)\le a_1 e^{-a_2t} \ \forall t\in [t_0, +\infty),$$
hence, by \eqref{e17} and \eqref{e18}, we get  $$\|x(t)-\ol x\|\le a_1 e^{-a_2t}\mbox{ and }\|\dot{x}(t)\|\le a_1 e^{-a_2t} \ \forall t\in [t_0, +\infty),$$ which proves $(b)$.

Assume now that $0<\t<\frac12.$ By using \eqref{e19} we obtain
$$\frac{d}{dt}\left(\s(t)\right)^{\frac{1-2\t}{1-\t}}=\frac{1-2\t}{1-\t}\left(\s(t)\right)^{\frac{-\t}{1-\t}}\dot{\s}(t)\le-\a\frac{1-2\t}{1-\t},$$
for almost every $t\in [t_0, +\infty)$.

By integration we get
$$\left(\s(t)\right)^{\frac{1-2\t}{1-\t}}\le-\ol \a t+\ol \b \ \forall t\in [t_0, +\infty),$$
where $\ol \a>0.$ Hence, there exists $T\ge 0$ such that $\s(T)\le 0 \ \forall t \in [T, +\infty),$ which implies that $x$ is constant on $[T,+\infty).$

Assume now that $\frac12<\t<1.$
By using \eqref{e19} we obtain
$$\frac{d}{dt}\left(\s(t)\right)^{\frac{1-2\t}{1-\t}}=\frac{1-2\t}{1-\t}\left(\s(t)\right)^{\frac{-\t}{1-\t}}\dot{\s}(t)\ge\a\frac{2\t-1}{1-\t}$$
for almost every $t\in [t_0, +\infty)$.

By integration we get
$$\s(t)\le(a_3 t+a_4)^{-\frac{1-\t}{2\t-1}} \ \forall t\in [t_0, +\infty),$$
where $a_3,a_4>0.$

From \eqref{e17} and \eqref{e18} we have  $$\|x(t)-\ol x\|\le (a_3 t+a_4)^{-\frac{1-\t}{2\t-1}}\mbox{ and }\|\dot{x}(t)\|\le (a_3 t+a_4)^{-\frac{1-\t}{2\t-1}} \ \forall t\in [t_0, +\infty),$$ which proves $(c)$.
\end{proof}

\end{document}